\numberwithin{equation}{section}
\newtheorem{theorem}[equation]{Theorem}
\newtheorem{thm}[equation]{Theorem}
\newtheorem{proposition}[equation]{Proposition}
\newtheorem{lemma}[equation]{Lemma}
\newtheorem{corollary}[equation]{Corollary}
\theoremstyle{remark}
\theoremstyle{definition}
\newtheorem{Rem}[equation]{Remark}  
\newtheorem{remark}[equation]{Remark}
\newtheorem{Def}[equation]{Definition} 
\newtheorem{exercise}[equation]{Exercise}
\newtheorem{definition}[equation]{Definition}
\newtheorem{example}[equation]{Example}
\def\XXint#1#2#3{{\setbox0=\hbox{$#1{#2#3}{\int}$}
	\vcenter{\hbox{$#2#3$}}\kern-.5\wd0}}
\newcommand{\ra}{\rightarrow}
\newcommand{\N}{\mathbb N}
\newcommand{\C}{\mathbb C}
\newcommand{\R}{\mathbb R}
\newcommand{\g}{\mathfrak{g}}
\newcommand{\h}{\mathfrak{h}}
\newcommand{\Span}{\operatorname{span}}
\newcommand{\acts}{\curvearrowright}
\newcommand{\norm}[1]{\left\Vert#1\right\Vert}
\def\eps{\epsilon}
\def\ra{\rightarrow}
\newcommand{\vol}{\operatorname{vol}}
\begin{document}

\title[A primer on Carnot groups]{A primer on Carnot groups:\\
{\small 
homogenous groups,
CC spaces, and regularity of their isometries}}

\author{Enrico Le Donne}

\address[Le Donne]{Department of Mathematics and Statistics, P.O. Box 35,
FI-40014,
University of Jyv\"askyl\"a, Finland}
\email{ledonne@msri.org}

\begin{abstract}
Carnot groups are distinguished spaces that are rich of structure: they are those Lie groups equipped with a path distance that is invariant by left-translations of the group and admit automorphisms that are dilations with respect to the distance.
We present the basic theory of Carnot groups together with several remarks.
We consider them as special cases of graded groups and as homogeneous metric spaces.
We discuss the regularity of isometries in the general case of Carnot-Carath\'eodory spaces and of nilpotent metric Lie groups. 
\end{abstract}

\renewcommand{\subjclassname} {\textup{2010} Mathematics Subject Classification}
\subjclass[]{
53C17, 
43A80, 
22E25, 
 22F30, 
14M17. 
}
 
\thanks{This essay is a survey written for a summer school on metric spaces.}
\date{\today}
\maketitle
\tableofcontents

Carnot groups are 
special
cases of Carnot-Carath\'eodory spaces associated with a system of bracket-generating vector fields. In particular, they are geodesic metric spaces. 
Carnot groups are also examples of homogeneous groups and stratified groups.
They are simply connected nilpotent groups and their Lie algebras admit  special gradings:  stratifications.
The stratification can be used to define a left-invariant metric on each group in such a way that with respect to this metric the group is self-similar.
Namely, there is a natural family of dilations on the group under which the metric behaves like the Euclidean metric under Euclidean dilations.

Carnot groups, and more generally homogeneous groups and Carnot-Carath\'eodory spaces, appear in several mathematical contexts.
Such groups appear in harmonic analysis, in the study of hypoelliptic differential operators, as boundary of strictly pseudo-convex complex domains, see the books \cite{Stein:book, Capogna-et-al} as initial references.
Carnot groups, with subFinsler distances, appear in geometric group theory as asymptotic cones of nilpotent finitely generated groups, see
\cite{Gromov1, Pansu}.
SubRiemannian Carnot groups are limits of Riemannian manifolds and are metric tangents of  subRiemannian manifolds. 
SubRiemannian geometries arise  in many areas of pure  and
applied  mathematics (such as algebra, geometry, analysis, mechanics, control theory, mathematical physics), as well as
 in applications (e.g., robotics), for references see the book \cite{Montgomery}. 
The literature on geometry and analysis on Carnot groups is plentiful. In addition to the previous references, we also cite some among the more authoritative ones \cite{
Roth:Stein,
Folland-Stein,
nagelstwe,
Koranyi-Reimann85,
varsalcou,
Heinonen-calculus,
magntesi,
Vittone-tesi,
Bonfiglioli:et:al,
jeancontrol,Rifford:book,
Agrachev_Barilari_Boscain:book}.

The setting of Carnot groups has both similarities and differences compared to the Euclidean case. 
In addition to the geodesic distance and the presence of dilations and translations,  on each Carnot group one naturally considers Haar measures, which are unique up to scalar multiplication and are translation invariant. Moreover, in this setting they have the important property of being Ahlfors-regular measures. In fact, the measure of each $r$-ball is the measure of the unit ball multiplied by $r^Q$, where $Q$ in a fixed integer. With so much structure of metric measure space, it has become highly interesting to study geometric measure theory, and other aspects of analysis or geometry, in Carnot groups.
On the one hand, with so much structure many results in the Euclidean setting generalizes to Carnot groups. The most celebrated example is
Pansu's version of Rademacher's theorem for Lipschitz maps (see \cite{Pansu}). 
Other results that generalize are 
the Isoperimetric Inequality \cite{pansucras},
the Poincar\'e Inequality \cite{Jerison},
the Nash Embedding Theorem \cite{LeDonne_PIE}, 
the Myers-Steenrod Regularity Theorem \cite{Capogna_LeDonne},
 and, at least partially, De-Giorgi Structure Theorem \cite{fssc}.
On the other hand, Carnot groups exhibit fractal behavior, the reason being that on such metric spaces the only curves of finite length are the horizontal ones. In fact, 
except for the Abelian groups, 
even if the distance is defined by a smooth subbundle,   it is not smooth and the Hausdorff dimension
 differs from the topological dimension. Moreover, these spaces contain no subset of positive measure that is bi-Lipschitz equivalent to a subset of a Euclidean space   and do not admit biLipschitz embedding into reflexive Banach spaces, nor into $L^1$, see \cite{Semmes2,Ambrosio-Kirchheim,Cheeger-Kleiner,Cheeger-Kleiner2,
Cheeger-Kleiner3}.
For this reason, Carnot groups, together with the classical fractals and boundaries of hyperbolic groups, are the main examples in an emerging field called `non-smooth analysis', in addition see 
 \cite{Semmes, Heinonen-Koskela, Cheeger,  Lang-Plaut, Heinonenbook, Laakso,Gromov-Schoen, Burago-Gromov-Perelman,
Cheeger-Colding, Bestvina-Mess, Margulis-Mostow, Kleiner-Leeb, Gromov, Bourdon_Pajot_2000, Kapovich-Benakli,
Bonk-Kleiner}.
A non-exhaustive, but long-enough list of other contribution in geometric measure theory on Carnot groups  is \cite{PansuThesis,
Koranyi-Reimann85,
Koranyi-Reimann95,
Ambrosio-Kirchheim:rect, HK,
Amb02,
Heinonenbook,
 Amb01, 
Pauls-minimal, 
Kirchheim-SerraCassano,
colepauls,
  ASCV, 
   Barone-Adesi-Serra-Cassano-Vittone,
 MSCV, 
 Danielli-Garofalo-Nhieu,
 Magnani-Vittone,
 Magnani08,
 Ritore-Rosales08,
Ritore, 
 AKL,
MR2531368,
LeDonne-Zust,
Bellettini-LeDonne,
LeDonne_Rigot_Heisenberg_BCP,
LeDonne_NicolussiGolo,
LeDonne_Rigot_BCP_graded_groups}, and more can be found in \cite{SerraCassanoGMT, LeDonne-lectures}.

In this essay, we shall distinguish between Carnot groups, homogeneous groups, and stratified groups. In fact, the latter are Lie groups with a particular kind of grading, i.e.,  a stratification. Hence, they are only an algebraic object. Instead, homogeneous groups are Lie groups that are arbitrarily graded but are equipped with distances that are one-homogeneous with respect to the dilations induced by the grading. Finally, for the purpose of this paper, the term Carnot group will be reserved for those  stratified groups that are equipped with homogeneous distances making them Carnot-Carath\'eodory spaces. It will be obvious that up to biLipschitz equivalence every Carnot group has one unique geometric structure. This is the reason why this term is sometimes replacing the term stratified group.

From the metric viewpoint,
Carnot groups are peculiar examples of isometrically homogeneous spaces.
In this context we shall  differently use the term `homogenous': it means the presence of a transitive group action.
In fact, on Carnot groups  left-translations act transitively and by isometries.
In some sense, these groups are
quite prototypical examples of geodesic homogeneous spaces.
An interesting result of Berestovskii gives that every 
isometrically homogeneous space whose distance is geodesic is indeed a Carnot-Carath\'eodory space, see  Theorem~\ref{thm:Berestovskii}.
Hence, Carnot-Carath\'eodory spaces and Carnot groups are natural examples in metric geometry. This is the reason why they appear in different contexts.

A purpose of this essay is to present the notion of a Carnot group from different viewpoints.
As we said, Carnot groups have very rich metric and geometric structures.
However, they are easily characterizable as geodesic spaces with self-similarity. Here is an  equivalent definition, which is intermediate between the standard one (Section~\ref{sec:Carnot}) and one of the simplest axiomatic ones (Theorem~\ref{characterization_Carnot}).
A {\em Carnot group} is a Lie group $G$ endowed with a left-invariant geodesic distance $d$ admitting for each $\lambda>0$ a bijection $\delta_\lambda:G\to G$ such that
$$d(\delta_\lambda(p), \delta_\lambda(q) = \lambda d(p,q),\qquad \forall p,q\in G.$$

In this paper we will pursue an Aristotelian approach:  we will begin by discussing the examples, starting from the very basic ones.  We first consider  Abelian Carnot groups, which are the finite-dimensional normed spaces, and then the basic nonAbelian group: the Heisenberg group. After presenting these examples, in Section~\ref{sec:Stratifications} we will formally discuss the definitions from the algebraic viewpoint, with some basic properties. In particular, we show the uniqueness of stratifications in Section~\ref{sec:uniq_strat}.
In Section~\ref{sec:metric} we introduce the homogeneous distances and the general definition of Carnot-Carath\'eodory spaces. In Section~\ref{sec:continuity} we show the continuity of  homogeneous distances with respect to the manifold topology.
Section~\ref{sec:limit} is devoted to present Carnot groups as limits. In particular, we consider tangents of Carnot-Carath\'eodory spaces.
In Section~\ref{sec:Isometrically} we discuss isometrically homogeneous spaces and explain why Carnot-Carath\'eodory spaces are the only geodesic examples.
In Section~\ref{sec:characterization} we provide a metric characterization of Carnot groups as the only spaces that are locally compact geodesic homogeneous and admit a dilation.
Finally, in Section~\ref{SecIsom} we overview several results that provide the regularity of distance-preserving homeomorphisms in Carnot groups and more generally in homogeneous groups and  Carnot-Carath\'eodory spaces.

\setcounter{section}{-1}
\section{\textbf{Prototypical examples}} 
We start by reviewing  basic examples of Carnot groups. First we shall point out that  Carnot groups of step one are in fact  
 finite-dimensional normed vector spaces.
Afterwards, we shall recall the simplest non-commutative example: the 
 Heisenberg group, equipped with various distances.

\begin{example}
\label{ex:Abel}
Let $V$ be a  finite dimensional vector space, so $V $ is isomorphic to $ \R^n$, for some $n \in \N$.
In such a space we have
\begin{itemize}
\item a group operation (sum) $p,q \mapsto p+q$,
\item dilations $p \mapsto \lambda p$ for each factor $\lambda >0$.
\end{itemize}
We are interested in the distances $d$ on $V$   that are

(i)   translation invariant:
$$   d(p+q,p+q') = d (q,q')      ,\qquad \forall p,q,q'\in V$$
   
(ii)   one-homogeneous with respect to the dilations:
$$  d(\lambda p, \lambda q) = \lambda d (p,q)    , \qquad \forall p,q \in V,\forall \lambda>0. $$
  
These are the distances coming from norms on $V$.
Indeed, setting $\norm{v}=d(0,v)$ for $v\in V$, the axioms of $d$ being a distance together with properties (i) and (ii), give that $\norm{\cdot}$ is a norm.
\end{example}

A geometric remark: for such distances, straight segments are geodesic.  By definition, a geodesic is an isometric embedding of an interval.
Moreover, if the norm is strictly convex, then straight segments are the only geodesic.

An algebraic remark: each (finite dimensional) vector space can be seen as a Lie algebra (in fact, a commutative Lie algebra) with Lie product $[p,q] = 0 $, for all $p,q\in V$.
Via the obvious identification points/vectors, this Lie algebra is identified with its Lie group $(V,+)$.

One of the theorem that we will generalize in Section~\ref{SecIsom} is the following.
\begin{theorem}  Every isometry of a normed space fixing the origin is a linear map.
\end{theorem}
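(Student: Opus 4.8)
The plan is to prove the Mazur--Ulam phenomenon in its simplest guise: any bijective isometry $f$ of a normed space $V$ is affine, so that the normalization $f(0)=0$ promotes it to a linear map. Since $f$ is an isometry it is $1$-Lipschitz, hence continuous, and its inverse $f^{-1}$ is again an isometry. Everything reduces to the single assertion that $f$ preserves midpoints,
\[
f\Big(\tfrac{p+q}{2}\Big)=\tfrac{f(p)+f(q)}{2}\qquad\forall p,q\in V.
\]
Granting this, the proof finishes quickly: taking $q=0$ and using $f(0)=0$ gives $f(\tfrac12 v)=\tfrac12 f(v)$, and combined with midpoint preservation this yields additivity $f(p+q)=f(p)+f(q)$; a continuous additive map between normed spaces is $\R$-linear (additivity forces $\Q$-linearity, and continuity extends this to $\R$).

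The heart of the matter---and the main obstacle---is to pin down the midpoint $m=\tfrac{p+q}{2}$ by the metric alone, for only then must the isometry carry it to the midpoint of $f(p),f(q)$. One cannot simply call $m$ \emph{the} metric midpoint: in a general norm the set $\{z:\norm{z-p}=\norm{z-q}=\tfrac12\norm{p-q}\}$ may be large (for the $\ell^\infty$ or $\ell^1$ norms it is an entire face of a parallelepiped). The remedy is the classical iterated-midset construction. I set $W_1=\{z:\norm{z-p}=\norm{z-q}=\tfrac12\norm{p-q}\}$ and, inductively, $W_{n+1}=\{z\in W_n:\norm{z-w}\le\tfrac12\diam W_n \text{ for every }w\in W_n\}$. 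Each $W_n$ is defined purely in terms of distances, so the bijective isometry $f$ satisfies $f(W_n(p,q))=W_n(f(p),f(q))$ for every $n$.

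The geometric engine is the point reflection $\psi(z)=p+q-z$, which is an isometry of $V$ (it is $-\id$ conjugated by a translation, and $\norm{-v}=\norm{v}$). Since $\psi$ interchanges $p$ and $q$ it preserves $W_1$, and being a diameter-preserving isometry it then preserves every $W_n$ by induction. Two consequences follow. First, from $\norm{w-\psi(w)}=2\norm{w-m}$ and $\psi(w)\in W_n$ one gets $\norm{w-m}\le\tfrac12\diam W_n$ for all $w\in W_n$, which both places $m$ in every $W_n$ and shows $\diam W_{n+1}\le\tfrac12\diam W_n$. As $\diam W_1\le\norm{p-q}$, the diameters tend to $0$ and hence $\bigcap_n W_n=\{m\}$. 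Running the identical construction from $f(p),f(q)$ gives $\bigcap_n W_n(f(p),f(q))=\{\tfrac{f(p)+f(q)}{2}\}$, and since $f$ maps the first intersection onto the second we conclude $f(m)=\tfrac{f(p)+f(q)}{2}$, which is the desired midpoint identity. The points demanding care are the nonemptiness of each $W_n$ (guaranteed by $m\in W_n$), the halving of the diameters, and the invariance $\psi(W_n)=W_n$; these are exactly the places where a naive singleton assumption on the midset would quietly fail, and they also explain why surjectivity of $f$ is genuinely used in the set equality $f(W_n)=W_n$.
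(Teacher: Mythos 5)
Your proof is correct and complete. For comparison: the paper states this theorem without any proof at all --- it appears only as a classical motivating fact in the section on Abelian examples, to be generalized later (via the machinery of Sections 6--7) --- so there is no ``paper's route'' to match against. What you have written is the standard Mazur--Ulam argument, and you handle exactly the points that a careless version would miss: the midset $W_1$ need not be a singleton for a general norm, the point reflection $\psi(z)=p+q-z$ is what forces $m=\tfrac{p+q}{2}$ to survive into every $W_n$ (so the sets are nonempty and the intersection is precisely $\{m\}$), and surjectivity of $f$ is genuinely needed for the set identity $f(W_n(p,q))=W_n(f(p),f(q))$. That last point is consistent with the paper's conventions, since it defines isometries as distance-preserving \emph{homeomorphisms}; without surjectivity the statement is false (e.g.\ $t\mapsto(t,\sin t)$ is a non-affine isometric embedding of $\R$ into $(\R^2,\ell^\infty)$). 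One small inaccuracy of attribution rather than of substance: the halving $\diam W_{n+1}\le\tfrac12\diam W_n$ is immediate from the definition of $W_{n+1}$ (any two of its points lie in $W_n$ and each is within $\tfrac12\diam W_n$ of every point of $W_n$) and does not require the reflection; the reflection's role is to show $m\in W_{n+1}$ and hence nonemptiness. The reduction from midpoint preservation to linearity (halving at $0$, then additivity, then $\Q$-homogeneity plus continuity) is also correct.
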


An analytic remark:  the definition of (directional) derivatives
	$$\lim_{h\to 0} \dfrac{f(x+hy) -  f(x)}{h} = \dfrac{\partial f}{\partial y} (x)$$
of a function $f$ between vector spaces makes use of the group operation, the dilations, and the topology.
We shall consider more general spaces on which these operations are defined.

\begin{example}\label{ex:Heis}

Consider $\R^3$ with the standard topology and differentiable structure.
Consider the operation 
$$(\bar x,\bar y,\bar z) \cdot (x,y,z):= \left(\bar x+x,\bar y+y,\bar z+z+\tfrac{1}{2} (\bar x y-\bar y x)\right).          $$
This operation gives a non-Abelian group structure on $\R^3$.
This group is called {\em Heisenberg group}.

The maps 
$\delta_\lambda (x,y,z) = (\lambda x,\lambda y,\lambda^2 z)$
 give a one-parameter family of group homomorphisms:
$$
	\delta_\lambda(pq) = \delta_\lambda(p) \delta_\lambda(q) \quad \text{ and } \quad
	\delta_\lambda   \circ \delta_\mu = \delta_{\lambda \mu}.
	$$
We shall consider distances that are

(i)	left (translation) invariant:
$$   d(p\cdot q,p\cdot q') = d (q,q')     ,\qquad \forall p,q,q'\in  G   $$
   
(ii)   one-homogeneous with respect to the dilations:
$$  d(\delta_\lambda (p), \delta_\lambda( q)) = \lambda d (p,q)    , \qquad \forall p,q \in G,\forall \lambda>0. $$

These distances, called `homogeneous', are completely characterized by the distance from a point, in  fact, just by the sphere at a point.

\noindent
Example~\ref{ex:Heis}.1. (Box distance) Set
$$d_{box}(0,p):= \norm{p}_{box} :=
\max\left\{|x_p|,|y_p|, \sqrt{|z_p|}\right\}.$$
This function is $\delta_\lambda$-homogeneous and satisfies the triangle  inequality.  
To check that it satisfies the triangle inequality we need to show that 
 $
 \norm{p\cdot q} \leq  \norm{p} + \norm{q}. 
 $ 
 First,
 \[  |x_{p\cdot q}|
=
|x_p +x_q|
\leq 
 |x_p| +|x_q|
 \leq 
 \norm{p} + \norm{q},
 \]
 and analogously for the $y$ component.
 Second,
  \begin{eqnarray*}
 \sqrt{   |z_{p\cdot q}|}
&=&
\sqrt{\left|z_p +z_q +\dfrac{1}{2} (x_p y_q - x_q y_p) \right|}\\
&\leq &
\sqrt{|z_p| +|z_q| +|x_p| |y_q| + |x_q| | y_p |}
\\&\leq& 
\sqrt{  \norm{p}^2 + \norm{q}^2 +2  \norm{p}  \norm{q} } 
\qquad  = 
 \norm{p} + \norm{q}.
 \end{eqnarray*}

The group structure induces left-invariant vector fields, a basis of which is 
$$X=\partial_1-\dfrac{y}{2}\partial_3 , \, \qquad
Y= \partial_2+\dfrac{x}{2}\partial_3, \, \qquad Z= \partial_3. $$

\noindent
Example~\ref{ex:Heis}.2. (Carnot-Carath\'eodory distances)
Fix a norm $\norm{\cdot}$ on $\R^2$, e.g., $\norm{(a,b)} = \sqrt{a^2+b^2}$.
Consider 
\begin{equation*}\label{dist_CC-Heis}
d(p,q):=\inf\left\{   
 \int_0^1 \norm{(a(t),b(t))
  }\mathrm d t
  \right \},
\end{equation*}
where the infimum is over the piece-wise smooth curves
$\gamma\in C_{\rm {pw}}^\infty([0,1];\R^3) $
with $	 \gamma(0)= p$, $
	\gamma(1)= q, $ and
	$\dot\gamma(t) = a(t) X_{\gamma(t)} + b(t) Y_{\gamma(t)}.$ 
Such a $d$ is a homogenous distance and for all $p,q\in \R^3$ there exists  a curve $\gamma$ realizing the infimum. These distances are  examples of CC distances also known as subFinsler distances.

Example~\ref{ex:Heis}.3. (Kor\'anyi  distance)
Another important example is given by the  Cygan-Kor\'anyi distance:
$$d_K(0,p):=\|p\|_{K}:= \left((x_p^2+y_p^2)^2+16\, z_p^2\right)^{1/4} .$$
The feature of such a distance is that it admits a conformal inversion, see \cite[p.27]{Capogna-et-al}.
\end{example}

The space $\R^3$ with the above group structure is an example of Lie group, i.e., the group multiplication and the group inversion are smooth maps.
The space $\g$ of left-invariant vector fields (also known as the Lie algebra) has a peculiar structure: admits a stratification.
Namely, setting
$$V_1:=\text{span}\left\{X,Y \right\} \qquad \text{ and } \quad\qquad V_2:=\text{span}\{Z\},$$
we have
$$\g= V_1\oplus V_2, \quad [V_1, V_1]=V_2, \quad[V_1, V_2]=
\{0\}.$$
As an exercise, verify that $Z=[X,Y]$.

\section{\textbf{Stratifications}} 
\label{sec:Stratifications}
In this section we discuss
Lie groups,
Lie algebras, 
and their
 stratifications. 
 We recall the definitions and we point out a few remarks. In particular, we show that a group can be stratified in a unique way, up to isomorphism.
\subsection{Definitions}

Given a group $G$ we denote by $gh$ or $g \cdot h$ the product of two elements $g,h\in G$ and by $g^{-1}$ the inverse of $g$.
 A {\em Lie group} is a differentiable manifold   endowed with a group structure such that the   map $
 G\times G\to G,
  (g,h) \mapsto  g^{-1}\cdot h$
 is $C^\infty$. 
 We shall denote by $e$ the
identity of the group and   by
$L_g(h):=g\cdot h$ the left translation.
Any vector $X$ in the tangent space at the identity  extends uniquely to  a left-invariant vector field $\tilde X$, as
$\tilde X_g = (d L_g)_e X$, for $g\in G$.

The {\em Lie algebra associated with a Lie group $G$} is the   vector space
$T_eG$  equipped with the bilinear operation defined by
$[X,Y]:=[\tilde X, \tilde Y]_e$, where the last bracket denotes the Lie bracket of vector fields, i.e., 
$[\tilde X, \tilde Y]:= \tilde X \tilde Y -  \tilde Y \tilde X$.

The general notion of Lie algebra is the following:
 A Lie algebra  $\g$ over $\R$  is a real vector space    together with a bilinear operation 
$$    [\cdot,\cdot]: \mathfrak{g}\times\mathfrak{g}\to\mathfrak{g},$$
called the {\em Lie bracket}, such that, for all $x, y, z \in \mathfrak{g}$, one has
\begin{enumerate}
 \item anti-commutativity: $       [x,y]=-[y,x]$,
 \item Jacobi identity: $[x,[y,z]] + [y,[z,x]] + [z,[x,y]] = 0.$
 \end{enumerate}

All Lie algebras considered here are over $\R$ and finite-dimensional. 
In what follows, given two subspaces $V, W$ of a Lie algebra, we set
$
[V,W] := \Span\{[X,Y] ;\; X\in V,\ Y\in W\} .
$

\begin{Def}[Stratifiable Lie algebras] \label{def:stratifiable-algebras} A \textit{stratification} of a Lie algebra $\g$ is a direct-sum decomposition
$$\g = V_1 \oplus V_2 \oplus \cdots \oplus V_s$$
for some integer $s\geq 1$, where $V_s \not= \{0\}$ and $[V_1,V_j] = V_{j+1}$ for all integers $j\in \{1,\dots,s\}$ and where we set $V_{s+1} = \{0\}$. 
We say that a Lie algebra is \textit{stratifiable} if there exists a stratification of it. 
We say that a Lie algebra is \textit{stratified} when it is stratifiable and endowed with a fixed stratification called the {\em associated stratification}.
\end{Def}

A  stratification is a particular example of grading. Hence, for completeness, we proceed by recalling the latter. 
More considerations on this subject can be found in \cite{LeDonne_Rigot_BCP_graded_groups}.

\begin{Def}[Positively graduable Lie algebras]  A \emph{positive grading} of a Lie algebra $\g$ is a
family $(V_t)_{t\in (0,+\infty)}$ of linear subspaces of $\g$, where all but finitely many of the $V_t$'s are $\{0\}$ such that $\g$ is their direct sum
$$\g=  \bigoplus_{t\in (0,+\infty)} V_t $$
  and where 
  $$[V_t, V_u]\subset V_{t+u}, \qquad  \text{ for all } t,u >0.$$
We say that a Lie algebra is \emph{positively graduable} if there exists a positive grading of it. 
   We say that a Lie algebra is \textit{graded} (or {\em positively graded}, to be more precise) when it is positively graduable and endowed with a fixed positive grading called the {\em associated positive grading}.
   \end{Def}
 
Given a positive
grading $\g=  \oplus_{t>0} V_t $, the subspace $V_t$ is called the {\em layer of degree} $t$ (or \textit{degree-t layer}) of the positive grading and non-zero elements in $V_t$ are said to {\em have degree $t$}. The {\em degree} of the grading is the maximum of all positive real numbers $t>0$ such that $V_t\neq \{0\}$.

Given two graded Lie algebras $\g$ and $\h$ with associated gradings
  $\g=\oplus_{t>0} V_t$ and $\h=\oplus_{t>0} W_t$,
  a {\em morphism of the graded Lie algebras} is 
  a Lie algebra homomorphism $\phi:\g \rightarrow \h$ such that $\phi (V_t) \subseteq W_t$ for all $t>0$.
  Hence, two graded Lie algebras $\g$ and $\h$ are {\em isomorphic as graded Lie algebras} if there exists a bijection $\phi:\g \rightarrow \h$ such that both $\phi$ and $\phi^{-1}$ are morphism of the graded Lie algebras.

Recall that for a Lie algebra $\g$ the terms of the lower central series are defined inductively by 
$\g^{(1)}=\g$, $\g^{(k+1)}=[\g,\g^{(k)}]$. 
A Lie algebra $\g$ is called {\em nilpotent} if $\g^{(s+1)}=\{0\}$ for some integer $s \geq 1$ and more precisely we say that $\g$ {\em nilpotent of step $s$} if $\g^{(s+1)}=\{0\}$ but $\g^{(s)}\neq \{0\}$.

\begin{remark}
A  positively graduable Lie algebra is 
nilpotent (simple exercise similar to  Lemma~\ref{lem051025}).  
On the other hand, not every nilpotent Lie algebra is positively graduable, see Exercise~\ref{nilpotent not graduable}.  
A stratification of a Lie algebra $\g$
is equivalent to   a  positive grading whose degree-one layer generates $\g$ as a Lie algebra. 
Therefore, given a stratification, the degree-one layer uniquely determines the stratification
and satisfies 
\begin{equation}\label{V1oplus}
\g=V_1\oplus [\g,\g].
\end{equation}
However, 
 an arbitrary vector space $V_1$ that is in direct sum with $[\g,\g]$ (i.e., satisfying \eqref{V1oplus}) may not generate a stratification, see Example~\ref{ex:graded-vs-stratified}.  
Any two stratifications of a Lie algebra are isomorphic,
see Section~\ref{sec:uniq_strat}.  
 A stratifiable Lie algebra with   $s$ non-trivial layers is nilpotent of step $s$.  
Every 2-step nilpotent Lie algebra   is stratifiable.
However, not all graduable Lie algebras   are stratifiable, see Example~\ref{graduable not stratifiable}.
Stratifiable and graduable Lie algebras admit several different grading: given a grading $\oplus V_t$, one can define the so-called $s$-power as the new grading $\oplus W_t$ by setting $W_t=V_{t/s}$, where $s>0$. Moreover, for a stratifiable algebra it is not true that any grading is a power of a stratification, see Example~\ref{ex:Heis:nonstrandard}.
\end{remark}

\begin{example}
Nilpotent-free Lie algebras are stratifiable.  Namely, fixed $r,s\in\N$, one considers formal elements $e_1,\ldots,e_r$ and all possible formal iterated Lie bracket up to length $s$, modulo the anti-commutativity and Jacobi relations, e.g., $[e_1,e_2]$ equals $ -[e_2,e_1]$ and both have length 2. The span of such vectors is the {\em nilpotent-free Lie algebra of rank $r$ and step $s$}. The strata of a stratification of such an algebra are formed according to the length of formal bracket considered.  
Any nilpotent Lie algebra is a quotient of a nilpotent-free Lie algebra, however, the stratification  may not pass to this quotient.
\end{example}

\begin{example}\label{ex:Heis:nonstrandard}
The Heisenberg Lie algebra $\h$ is the 3-dimensional lie algebra spanned by three vectors $X,Y,Z$ and with only nontrivial relation $Z=[X,Y]$, cf Example~\ref{ex:Heis}.
This stratifiable algebra also admits positive gradings that are not power of stratifications. 
In fact, for $\alpha \in (1,+\infty)$, 
 the \textit{non-standard grading of exponent} $\alpha$ is
\begin{equation*} 
\h =W_1 \oplus W_\alpha \oplus W_{\alpha+1}
\end{equation*}
where $$ W_1:=\Span\{X\} ,\;\; W_\alpha:=\Span\{Y\},\;\; W_{\alpha+1}:=\Span\{Z\}.$$ 
Up to isomorphisms of graded Lie algebras and up to powers, these non-standard gradings give all the possible positive gradings of $\h$ that are not a stratification.
\end{example}

\begin{example}\label{graduable not stratifiable}
Consider the $7$-dimensional Lie algebra  $\g$ generated by $X_1,\ldots, X_7$ with only nontrivial brackets
$$ 
\begin{array} {ccc c ccc c ccc}
{[X_1,X_2]} &=&  X_{ 3} ,&&
{[X_1,X_3]} &=&2 X_{ 4} ,&&
{[X_1,X_4]} &=& 3 X_{ 5} \\
{[X_2,X_3]} &=& X_{ 5} ,&&
{[X_1,X_5]} &=&  4 X_{ 6},& &
{[X_2,X_4]} &=&  2 X_{ 6} \\
{[X_1,X_6]} &=&  5 X_{ 7} ,&&
{[X_2,X_5]} &=&  3 X_{ 7} ,&&
{[X_3,X_4]} &=&X_{ 7}. 
\end{array}$$
This Lie algebra $\g$ admits a grading   but it is not stratifiable.

\end{example}

\begin{example}\label{nilpotent not graduable}
There exist nilpotent Lie algebras that admit no positive grading.
For example, consider the Lie algebra of
dimension 7 with basis $X_1, \ldots , X_7$ with only nontrivial relations given by

$[X_1, X_j] =X_{j+ 1}$ if
$2 \leq j \leq 6$, $\,\,[X_2, X_3] =X_6, \,\,\,[X_2, X_4] = - [X_2, X_5] = [X_3, X_4] =X_7.$
\end{example}

\begin{example} \label{ex:graded-vs-stratified} We give here an example of a stratifiable Lie algebra $\g$ for which one can find a subspace $V$ in direct sum with $[\g,\g]$ but that does not generate a stratification.
We consider $\g$ the stratifiable Lie algebra of step 3 generated by $e_1$, $e_2$ and $e_3$ and with the relation $[e_2,e_3]=0$. Then $\dim \g = 10$ and a stratification of $\g$ is
generated by $ V_1:=\Span\{e_1, e_2, e_3\}$.
Taking $V:=\Span\{e_1, e_2 + [e_1,e_2], e_3\}$, one has \eqref{V1oplus}, but since 
$[V,V]$ and $[V,[V,V]]$ both contain $[e_2,[e_1,e_3]]$, $V$ does not generate a stratification of $\g$. 
\end{example}

\begin{Def}[Positively graduable, graded, stratifiable, stratified groups] \label{def:graded-stratified-groups} We say that a Lie group $G$ is a \emph{positively graduable} (respectively  \emph{graded}, {\em  stratifiable}, \emph{stratified}) \textit{group} if $G$ is a connected and simply connected Lie group whose Lie algebra is positively graduable (respectively graded, stratifiable, stratified). 
\end{Def}

For the sake of completeness, in Theorem~\ref{thm:siebert} below  we present an equivalent definition of positively graduable groups in terms of existence of a contractive group automorphism. The result is due to Siebert.

\begin{Def}[Dilations on graded Lie algebras] \label{def:dilations-algebras}
Let 
$\g$ be a graded Lie algebra with associated positive grading $\g = \oplus_{t>0} V_t $. For $\lambda>0$, 
we define the \emph{dilation on $\g$ (relative to the associated positive grading) of factor $\lambda$} as the unique linear map $\delta_\lambda:\g\to\g$ such that  
$$\delta_\lambda (X) = \lambda^t X \qquad \forall X\in V_t.$$
\end{Def}

Dilations $\delta_\lambda:\g\to\g$ are Lie algebra isomorphisms, i.e., 
	$ \delta_\lambda([X,Y])=[\delta_\lambda X,\delta_\lambda Y]$ for all $X, Y\in \g$. The family of all dilations $(\delta_\lambda)_{\lambda>0}$ is a one-parameter group of Lie algebra isomorphisms, i.e., $\delta_{\lambda}\circ\delta_{\eta}=\delta_{\lambda\eta}$ for all $\lambda, \eta >0$.

\begin{exercise}
Let $\g$ and $\h$ be   graded Lie algebras with associated dilations 
$\delta_\lambda^\g$ and $\delta_\lambda^\h$.
Let $\phi:\g \to \h$ be a Lie algebra homomorphism.
Then $\phi$ is a morphism of graded Lie algebras if and only if
$\phi\circ \delta_\lambda^\g = \delta_\lambda^\h \circ \phi,$ for all $\lambda>0$.
[Solution: Let $\g=  \oplus_{t>0} V_t $ be the grading of $\g$.
If $x\in V_t$ then $\phi (\delta_\lambda x)=\phi (\lambda^t x) = \lambda^t \phi(x)$, which gives the equivalence.]
\end{exercise}

Given a Lie group homomorphism $\phi:G\rightarrow H$, we denote by $\phi_*:\g \rightarrow \mathfrak{h}$ the associated Lie algebra homomorphism. If $G$ is simply connected, given a Lie algebra homomorphism $\psi:\g \rightarrow \mathfrak{h}$, there exists a unique Lie group homomorphism $\phi:G\rightarrow H$ such that $\phi_* = \psi$ (see \cite[Theorem 3.27]{Warner}). This allows to define dilations on $G$ as stated in the following definition. 

\begin{Def}[Dilations on graded groups] \label{def:dilations_groups}
 	Let $G$ be a graded group with Lie algebra $\g$. 
	Let $\delta_\lambda:\g\to\g$ be the dilation on $\g$ (relative to the associated positive grading of $\g$) of factor $\lambda>0$. 
	The \emph{dilation on $G$ (relative to the associated positive grading) of factor $\lambda$} is the unique Lie group automorphism, also denoted by $\delta_\lambda:G\to G$, such that $(\delta_\lambda)_*=\delta_\lambda$.
\end{Def}

For technical simplicity, one keeps the same notation for both dilations on the Lie algebra $\g$ and the group $G$. There will be no ambiguity here. Indeed, graded groups being nilpotent and simply connected, the exponential map $\exp: \g \to G$ is a diffeomorphism from $\g$ to $G$ (see \cite[Theorem~1.2.1]{Corwin-Greenleaf} or \cite[Proposition~1.2]{Folland-Stein}) and one has $\delta_\lambda\circ\exp = \exp\circ\,\delta_\lambda$ (see \cite[Theorem 3.27]{Warner}), hence dilations on $\g$ and dilations on $G$ coincide in exponential coordinates.

For the sake of completeness, we give now an equivalent characterization of graded groups due to Siebert. If $G$ is a topological group and $\tau:G\rightarrow G$ is a group isomorphism, we say that $\tau$ is {\em contractive} if, for all $g\in G$, one has $\lim_{k\to\infty}\tau^k(g)=e$. We say that $G$ is {\em contractible} if $G$ admits a contractive isomorphism.

For graded groups, dilations of factor $\lambda <1$ are contractive isomorphisms, hence positively graduable groups are contractible. Conversely, Siebert proved (see Theorem~\ref{thm:siebert} below) that if $G$ is a connected locally compact group and $\tau:G\rightarrow G$ is a contractive isomorphism then $G$ is a connected and simply connected Lie group and $\tau$ induces a positive grading on the Lie algebra $\g$ of $G$ (note however that $\tau$ itself  may not be a dilation relative to the induced grading).

\begin{thm} \label{thm:siebert} \cite[Corollary~2.4]{Siebert} A topological group $G$ is a positively graduable Lie group if and only if $G$ is a connected locally compact contractible group.
\end{thm}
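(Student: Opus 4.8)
The plan is to treat the two implications separately, since the forward direction is elementary while the converse carries all the weight.

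For the direction from a positively graduable Lie group to a connected locally compact contractible group, note that by Definition~\ref{def:graded-stratified-groups} such a $G$ is a connected Lie group, hence automatically locally compact, so only contractibility needs checking. Here I would invoke the dilations of Definition~\ref{def:dilations_groups}: on the Lie algebra one has $\delta_\lambda X = \lambda^t X$ on each layer $V_t$ with $t>0$, so for any fixed $\lambda<1$ the iterates satisfy $\delta_\lambda^k X = \lambda^{kt}X \to 0$ for every $X\in\g$. Since $\exp:\g\to G$ is a diffeomorphism intertwining the two families of dilations, $\delta_\lambda^k(g)\to e$ for every $g\in G$, so $\delta_\lambda$ with $\lambda<1$ is a contractive automorphism and $G$ is contractible.

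For the converse, assume $G$ is connected and locally compact and carries a contractive automorphism $\tau$. The first task is to produce the Lie group structure and simple connectivity. I would show first that $G$ has no small subgroups, so that the solution of Hilbert's fifth problem (Gleason--Montgomery--Zippin--Yamabe) makes $G$ a Lie group; the contractive automorphism is exactly the tool for this, since the contraction can be shown to be uniform on compacta, so the iterates $\tau^k$ push any compact subgroup into an arbitrarily small neighborhood of $e$ and force it to be trivial once a neighborhood free of nontrivial subgroups is available. The same argument shows $G$ has no nontrivial compact subgroup; since a connected Lie group is diffeomorphic to the product of a maximal compact subgroup with a Euclidean space, the triviality of the maximal compact subgroup yields that $G$ is diffeomorphic to $\R^n$, hence simply connected. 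At this point $\exp:\g\to G$ is a diffeomorphism.

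It remains to grade $\g$. Let $A:=\tau_*:\g\to\g$ be the induced Lie-algebra automorphism; since $\exp$ intertwines $\tau$ and $A$, contractivity forces $A^kX\to 0$ for all $X$, so every eigenvalue of $A$ has modulus $<1$. I would decompose $\g\otimes\C$ into the generalized eigenspaces $\g_\mu$ of $A$ and, for each modulus $r=|\mu|<1$, set $t:=-\log r>0$ and let $V_t$ be the real part of $\bigoplus_{|\mu|=r}\g_\mu$ (conjugate eigenvalues share a modulus, so this is a genuine real subspace). Because $A$ is an automorphism one checks $[\g_\mu,\g_\nu]\subseteq\g_{\mu\nu}$ by induction on nilpotency indices, using the identity $(A-\mu\nu)[X,Y]=[(A-\mu)X,AY]+\mu[X,(A-\nu)Y]$, and since moduli multiply this gives $[V_t,V_u]\subseteq V_{t+u}$. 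As $\dim\g<\infty$ there are finitely many layers, so this is a positive grading of $\g$, whence $G$ is positively graduable; nilpotency is then automatic, consistently with Lemma~\ref{lem051025}.

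The main obstacle is the extraction of the Lie group structure from the bare hypotheses. The grading is essentially linear algebra once $G$ is known to be a simply connected Lie group, and the forward direction is a direct computation; but the passage from ``connected locally compact contractible'' to ``Lie'' genuinely requires the deep structure theory of locally compact groups, and the precise role of $\tau$ --- that its contraction is uniform on compacta and thereby annihilates small subgroups --- is the delicate point that the argument must supply.
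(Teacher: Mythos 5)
Your argument follows the paper's sketch essentially verbatim: the forward direction via the dilations $\delta_\lambda$ with $\lambda<1$, and the converse by first extracting the Lie group structure from the general theory of locally compact groups and then grading $\g$ by the real parts of the generalized eigenspaces of $\tau_*$ on $\g\otimes\C$ grouped according to $t=-\log|\mu|$, exactly as in the paper. One caution: your reduction to the no-small-subgroups property is circular as phrased (you invoke ``a neighborhood free of nontrivial subgroups'' in order to establish precisely that property), but since the paper itself defers this step to the structure theory in Siebert's work, this does not separate your route from the paper's.
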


\begin{proof}[Sketch of the proof]
Regarding the nontrivial direction, by the general theory of locally compact groups one has that $G$ is a Lie group. Hence, the 
contractive group isomorphism induces a contractive Lie algebra isomorphism $\phi$. 
Passing to the complexified Lie algebra, one consider the Jordan form of $\phi$ with generalized eigenspaces $V_\alpha$, $\alpha\in \C$.
The $t$-layer $V_t$ of the grading is then defined as
 the real part of the span of those $ V_\alpha$ with $-\log |\alpha| = t$.
\end{proof}

\begin{remark}
A distinguished class of groups in Riemannian geometry are the so-called Heintze's groups. They are those groups that admit a structure of negative curvature.
It is possible to show that  such groups are precisely the direct product of a graded group $N$ times $\R$ where $\R$ acts on $N$ via the grading.
\end{remark}

\subsection{Uniqueness of stratifications}\label{sec:uniq_strat}
We start by observing the following simple fact.
\begin{lemma}\label{lem051025}
 	If $\g=V_1\oplus\dots\oplus V_s$ is a stratified Lie algebra, then
	\[
	\g^{(k)} = V_k\oplus\dots\oplus V_s .
	\]
	In particular, $\g$ is nilpotent of step $s$.
\end{lemma}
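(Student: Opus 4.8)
The plan is to prove the identity by induction on $k$, but the crucial preliminary observation is that a stratification is automatically a grading, in the sense that $[V_i, V_j] \subseteq V_{i+j}$ for all $i, j \geq 1$ (with the convention $V_m = \{0\}$ for $m > s$). The definition of stratification only postulates $[V_1, V_j] = V_{j+1}$, so this refined bracket estimate must be established first; this is the step I expect to require the real work, and it is where the Jacobi identity enters.

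To prove $[V_i, V_j] \subseteq V_{i+j}$, I would fix $j$ and induct on $i$. The base case $i=1$ is exactly the defining relation $[V_1, V_j] = V_{j+1}$. For the inductive step I would use that $V_{i+1} = [V_1, V_i]$, so by bilinearity it suffices to control $[[a,b], c]$ with $a \in V_1$, $b \in V_i$, $c \in V_j$. The Jacobi identity rewrites this as $[[a,b],c] = [a, [b,c]] + [b, [c,a]]$; the first term lies in $[V_1, V_{i+j}] = V_{i+j+1}$ by the inductive hypothesis applied to $[b,c] \in V_{i+j}$, and the second lies in $[V_i, V_{j+1}] \subseteq V_{i+j+1}$, again by the inductive hypothesis (applied with the index $j+1$, using $[c,a] \in [V_1,V_j] = V_{j+1}$). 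This closes the induction and yields the grading property.

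With the grading property in hand, the main claim follows by a second induction, this time on $k$. The base case $k=1$ is the hypothesis $\g^{(1)} = \g = V_1 \oplus \cdots \oplus V_s$. Assuming $\g^{(k)} = V_k \oplus \cdots \oplus V_s$, I would compute $\g^{(k+1)} = [\g, \g^{(k)}]$ through two opposite inclusions. For the forward inclusion, expanding $[\g, \g^{(k)}] = \sum_{i \geq 1,\, j \geq k} [V_i, V_j]$ and invoking the grading property places every summand inside $V_{i+j}$ with $i+j \geq k+1$, hence inside $V_{k+1} \oplus \cdots \oplus V_s$. For the reverse inclusion, I would note that for each $j$ with $k \leq j \leq s$ one has $V_j \subseteq \g^{(k)}$, so $V_{j+1} = [V_1, V_j] \subseteq [\g, \g^{(k)}] = \g^{(k+1)}$; letting $j$ run over $k, \ldots, s$ recovers all of $V_{k+1}, \ldots, V_s$, while $V_{s+1} = \{0\}$ contributes nothing.

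Finally, the nilpotency statement is immediate from the formula: one gets $\g^{(s)} = V_s \neq \{0\}$ and $\g^{(s+1)} = V_{s+1} \oplus \cdots = \{0\}$ since $V_{s+1} = \{0\}$, so $\g$ is nilpotent of step exactly $s$. The only genuine obstacle is the bracket estimate $[V_i, V_j] \subseteq V_{i+j}$; once that is in place, both the lower-central-series computation and the nilpotency conclusion are routine bookkeeping.
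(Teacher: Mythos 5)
The paper states this lemma as a ``simple fact'' and gives no proof, so there is nothing to compare against; your argument is the standard one and is correct. You rightly identify the only nontrivial point, namely upgrading the defining relation $[V_1,V_j]=V_{j+1}$ to the full grading property $[V_i,V_j]\subseteq V_{i+j}$ via the Jacobi identity (a fact the paper itself uses implicitly when it remarks that a stratification is a special positive grading). One small bookkeeping correction: you announce the first induction as ``fix $j$ and induct on $i$,'' but your inductive step invokes the hypothesis both at $(i,j)$ and at $(i,j+1)$, so the induction must be on $i$ alone with the claim quantified over all $j$ simultaneously (the base case $i=1$ holds for every $j$ by definition, so this costs nothing). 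With that rephrasing, both inductions and the nilpotency conclusion go through exactly as you describe.
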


Now we show that the stratification of a stratifiable Lie algebra is unique up to isomorphism.
Hence, also the structure of a stratified group is essentially unique.
\begin{proposition}
 	Let $\g$ be a stratifiable Lie algebra with two stratifications,
	\[
	V_1\oplus\dots\oplus V_s = \g = W_1\oplus\dots\oplus W_t .
	\]
	Then $s=t$ and 	there is a Lie algebra automorphism $A:\g\to\g$ such that $A(V_i)=W_i$ for all $i$.
\end{proposition}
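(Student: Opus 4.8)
The plan is to exploit the lower central series, which is intrinsic to $\g$ and hence common to both stratifications. Applying Lemma~\ref{lem051025} to each decomposition gives
$$\g^{(k)} = V_k\oplus\dots\oplus V_s = W_k\oplus\dots\oplus W_t \qquad\text{for all } k\geq 1.$$
By this displayed identity the step of $\g$ — the largest $k$ with $\g^{(k)}\neq\{0\}$ — equals $s$ when read off the first stratification and $t$ when read off the second, whence $s=t$. Thus both stratifications have the same number of layers and, for every $k$, one has $\bigoplus_{i\geq k}V_i = \g^{(k)} = \bigoplus_{i\geq k}W_i$.

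Next I would construct the automorphism layer by layer through the associated graded algebra. The identity $\g^{(k)}=V_k\oplus\g^{(k+1)}$ means that the quotient projection $\g^{(k)}\to\g^{(k)}/\g^{(k+1)}$ restricts to a linear isomorphism on $V_k$, and the same holds for $W_k$. I would therefore define $A\colon\g\to\g$ by declaring, for $v\in V_k$, that $A(v)$ is the unique element of $W_k$ with $A(v)\equiv v \pmod{\g^{(k+1)}}$, and extending linearly. By construction $A$ is a linear bijection with $A(V_k)=W_k$ for every $k$ (in particular $\dim V_k=\dim W_k=\dim\g^{(k)}/\g^{(k+1)}$).

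The remaining and principal task is to check that $A$ preserves brackets, for which by bilinearity it suffices to treat $v\in V_i$ and $v'\in V_j$. Here I would invoke the standard filtration property $[\g^{(i)},\g^{(j)}]\subseteq\g^{(i+j)}$. Writing $A(v)=v+a$ and $A(v')=v'+b$ with $a\in\g^{(i+1)}$ and $b\in\g^{(j+1)}$, expanding $[A(v),A(v')]=[v,v']+[v,b]+[a,v']+[a,b]$ and observing that each of the last three terms lies in $\g^{(i+j+1)}$, I obtain $[A(v),A(v')]\equiv[v,v'] \pmod{\g^{(i+j+1)}}$. On the other hand $[v,v']\in[V_i,V_j]\subseteq V_{i+j}$, so $A[v,v']$ is by definition the element of $W_{i+j}$ congruent to $[v,v']$ modulo $\g^{(i+j+1)}$. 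Since also $[A(v),A(v')]\in[W_i,W_j]\subseteq W_{i+j}$, both $A[v,v']$ and $[A(v),A(v')]$ lie in $W_{i+j}$ and are congruent modulo $\g^{(i+j+1)}$; as $W_{i+j}\cap\g^{(i+j+1)}=\{0\}$, they coincide. Hence $A$ is a Lie algebra automorphism carrying each $V_i$ onto $W_i$, as required.

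I expect the genuinely delicate point to be precisely this last bracket computation: confirming that the cross terms $[v,b]$, $[a,v']$, $[a,b]$ all fall into $\g^{(i+j+1)}$ and that $W_{i+j}$ meets $\g^{(i+j+1)}$ trivially. Conceptually nothing deeper is happening — each stratification records an isomorphism of $\g$ onto its associated graded Lie algebra $\operatorname{gr}\g=\bigoplus_k\g^{(k)}/\g^{(k+1)}$, and $A$ is simply one such isomorphism followed by the inverse of the other.
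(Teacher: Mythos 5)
Your proposal is correct and follows essentially the same route as the paper: both obtain $s=t$ from Lemma~\ref{lem051025}, define $A$ on $V_k$ as the unique element of $W_k$ congruent modulo $\g^{(k+1)}$, and verify the bracket identity by showing the cross terms land in $\g^{(i+j+1)}$ while both sides lie in $W_{i+j}$, which meets $\g^{(i+j+1)}$ trivially. The only cosmetic difference is that you expand $[A(v),A(v')]$ directly, whereas the paper rewrites the difference as $[a_i-Aa_i,b_j]-[Aa_i,Ab_j-b_j]$; the content is identical.
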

\begin{proof} 	
	We have $\g^{(k)}=V_k\oplus\dots\oplus V_s = W_k\oplus\dots\oplus W_t$.
	Then
	$s=t$. Moreover,
	 the quotient mappings $\pi_k:\g^{(k)}\to \g^{(k)}/\g^{(k+1)}$ induce linear isomorphisms $\pi_k|_{V_k}:V_k\to \g^{(k)}/\g^{(k+1)}$ and $\pi_k|_{W_k}:W_k\to \g^{(k)}/\g^{(k+1)}$.
	For $v\in V_k$ define $A(v):= (\pi_k|_{W_k})^{-1}\circ \pi_k|_{V_k}(v)$. 
	Explicitly, for $v\in V_k$ and $w\in W_k$ we have 
	\[
	A(v)=w \quad\iff\quad v-w\in \g^{(k+1)}.
	\]
	Extend $A$ to a linear map $A:\g\to\g$. 
	This is clearly a linear isomorphism and $A(V_i)=W_i$ for all $i$.
	We need now to show that $A$ is a Lie algebra morphism, i.e., $[Aa,Ab]=A([a,b])$ for all $a,b\in \g$.	
	Let $a=\sum_{i=1}^sa_i$ and $b=\sum_{i=1}^sb_i$ with $a_i,b_i\in V_i$.
	Then
	\begin{align*}
	 	A([a,b]) &= \sum_{i=1}^s\sum_{j=1}^s A([a_i,b_j]) \\
		[Aa,Ab] &= \sum_{i=1}^s\sum_{j=1}^s [Aa_i,Ab_j] ,
	\end{align*}
	therefore we can just prove $A([a_i,b_j])=[Aa_i,Ab_j]$ for $a_i\in V_i$ and $b_j\in W_j$.
	Notice that $[a_i,b_j]$  belongs to $V_{i+j}$ and $[Aa_i,Ab_j]$   belongs to $W_{i+j}$. 
	Therefore we have $A([a_i,b_j])=[Aa_i,Ab_j]$ if and only if  
	$[a_i,b_j] - [Aa_i,Ab_j]\in\g^{(i+j+1)}$.
	And in fact
	\[
	[a_i,b_j] - [Aa_i,Ab_j] 
	= [a_i-Aa_i,b_j] - [ Aa_i,Ab_j-b_j] \in \g^{(i+j+1)}
	\]
	because, on the one hand, $a_i-Aa_i\in \g^{(i+1)}$ and $b_j\in W_j$, so $[a_i-Aa_i,b_j]\in\g^{(i+j+1)}$, on the other hand, $Aa_i\in W_i$ and $Ab_j-b_j\in \g^{(j+1)}$, so $[ Aa_i,Ab_j-b_j] \in \g^{(i+j+1)}$. 
\end{proof}

\section{\textbf{Metric groups}}\label{sec:metric}

In this section we review homogeneous distances on  groups. 
The term {\em homogeneous} should not be confused with its use in the theory of Lie groups. Indeed, clearly a Lie group is a homogeneous space since it acts on itself by left translations and in fact we will only consider distances that are left-invariant.
We shall use the term homogeneous as it is done in harmonic analysis to mean that there exists a transformation that dilates the distance.
In the literature there is another definition of {\em homogeneous group}, but even if this definition is algebraic, it coincides with our definition. Namely, around 1970 Stein introduced a definition of homogeneous group as a graded group with degrees greater or equal than 1. 
After \cite{Hebisch-Sikora}, we know that these are exactly those groups groups that admit a homogeneous distance, which is defined as follows.

\begin{definition}[Homogeneous distance]

Let $G$ be a Lie group.
Let $\{\delta_\lambda\}_{\lambda\in (0,\infty)}$ be a one-parameter family of continuous automorphisms of $G$.
A distance $d$ on $G$ is called {\em homogeneous} if
\begin{enumerate}
\item[(i)] it is  {\em left-invariant}, i.e., 
$$   d(p\cdot q,p\cdot q') = d (q,q') , \qquad \forall p,q,q'\in G;$$
   \item[(ii)]  it is  {\em one-homogeneous} with respect to $\delta_\lambda$, i.e., 
$$  d(\delta_\lambda (p), \delta_\lambda (q) ) = \lambda d (p,q),\forall p,q\in G, \forall \lambda>0. $$
\end{enumerate}
\end{definition}

Some remarks are in due:

(1) 
 Conditions (i) and (ii) implies that the distance is
 {\em admissible}, i.e., it induces the manifold topology, and in fact $d$ is a continuous function\footnote{This claim is not at all trivial. However, in Section~\ref{sec:continuity} we provide the proof in the case the group is positively graded and the maps $\{\delta_\lambda\}$ are the dilations relative to the grading.};
  
(2) Because of the existence of one such a distance, the automorphisms $\delta_\lambda$, with $\lambda\in (0,1)$, are contractive.

(3) The existence of a contractive automorphism  implies the existence of a grading of the Lie algebra of the group, see Theorem~\ref{thm:siebert}.

(4) The existence of a homogeneous distance implies that the smallest degree of the grading should be $\geq 1$.
These kind of groups are called {\em homogeneous} by Stein and collaborators.

(5) Any homogeneous group admits a homogeneous distance, see \cite{Hebisch-Sikora} and \cite[Section 2.5]{LeDonne_Rigot_BCP_graded_groups}

\subsection{Abelian groups}
The basic example of a homogeneous group is provided by Abelian groups, which 
admit the stratification where the one-degree stratum $V_1$ is the whole Lie algebra.
Homogenous distances for this stratification are the distances induced by norms, cf.~Section~\ref{ex:Abel}.

 
\subsection{Heisenberg group}
 Heisenberg group equipped with CC-distance, or box distance, or Kor\'anyi distance
 is the next important example of homogeneous group, cf.~Section~\ref{ex:Heis}.  

\subsection{Carnot groups}
\label{sec:Carnot}
Since on stratified groups the degree-one stratum $V_1$ of the stratification of the Lie algebra generates the whole Lie algebra, on these groups there are homogeneous distances that have a length structure defined as follows.

Let $G$ be a stratified group. So $G$ is simply connected Lie group and its Lie algebra $\g$ has a stratification: $\g = V_1 \oplus V_2 \oplus \cdots \oplus V_s$.
Therefore, we have $\g = V_1 \oplus [V_1,V_1] \oplus \cdots \oplus V_1^{(s)}$.

Fix a norm $\norm{\cdot}$ on $V_1$.
Identify the space $\g$ as $T_eG$ so 
$V_1\subseteq T_e G$. 
By left translation, extend $V_1$ and $\norm{\cdot}$ to a left-invariant subbundle $\Delta$ and a left-invariant norm $\norm{\cdot}$:
$$ \Delta_p := ({\rm d} L_p)_e V_1 \qquad\text{ and }\qquad \norm{({\rm d} L_p)_e(v)} := \norm{v},\qquad \forall p\in G, \forall v\in V_1.$$
Using piece-wise $C^\infty$ curves tangent to $\Delta$, we define the {\em CC-distance} associated with $\Delta$  and $\norm{\cdot}$ as
\begin{equation}\label{dist_CC-Carnot}
d(p,q):=\inf\left\{   \left.
    \int_0^1 \norm{\dot\gamma(t)  }\mathrm d t
    \;\right|\;\gamma\in C_{\rm {pw}}^\infty([0,1];G), 
    \begin{array}{ccc}
	 \gamma(0)= p, \\
	\gamma(1)= q, 
	\end{array}
   \dot \gamma \in \Delta
   \right \}.
\end{equation}

Since $\Delta$  and $\norm{\cdot}$ are left-invariant, $d$ is left-invariant.
Since, 
$\delta_\lambda V_1 = \lambda V_1$, $\delta_\lambda \Delta = \lambda \Delta$,  and $\norm{\lambda v}=\lambda \norm{ v}$,
$d$ is one-homogeneous with respect to $\delta_\lambda$.

The fact that $V_1$ generates $\g$, implies that
for all $p,q\in G$ there exists
 a curve 
$ \gamma\in C^\infty([0,1];G) $
with
  $ \dot \gamma \in \Delta
  $  joining $p$ to $q$. Consequently, $d$ is finite valued.

We call the data $(G, \delta_\lambda, \Delta,\norm{\cdot}, d )$ a {\em Carnot group}, or, more explicitly, {\em 
subFinsler Carnot group}. Usually, the term Carnot group is reserved for  {\em 
subRiemannian Carnot group}, i.e., when the norm comes from a scalar product.

\subsection{Carnot-Carath\'eodory spaces}

Carnot groups are particular examples of a more general class of spaces.
These spaces have been named after Carnot and Carath\'eodory by Gromov. However, in the work of  Carnot and Carath\'eodory there is very little about these kind of geometries (one can find some sprout of a notion of contact structure in \cite{Caratheodory} and in the adiabatic processes' formulation of Carnot).  
 Therefore, we should say that the pioneer work has been done mostly by Gromov and
Pansu, see 
\cite{PansuThesis,  Gromov-polygrowth, pansucras, Pansu83, Pansu-croissance, Strichartz, Pansu, Hamenstadt}, see also \cite{Gromov1, Gromov}.

Let $M$ be a smooth manifold.
Let $\Delta$ be a subbundle of the tangent bundle of $M$.
Let $\norm{\cdot}$ be a `smoothly varying' norm on $\Delta$.
Analogously,
using 
   \eqref{dist_CC-Carnot} with $G$ replaced by $M$,
   one defines
    the {\em CC-distance} associated with $\Delta$  and $\norm{\cdot}$.     
Then $(M, \Delta,\norm{\cdot}, d )$ is called {\em Carnot-Carath\'eodory space} (or also {\em  CC-space} or {\em  subFinsler manifold}).

\subsection{Continuity of homogeneous distances} \label{sec:continuity}
We want to motivate now the fact that a homogeneous distance on a group induces the correct topology.
Such a fact is also true when one considers quasi distances and dilations that are not necessarily $\R$-diagonalizable automorphisms.
For the sake of simplicity, we present here the simpler case of distances that are homogeneous with respect to the standard dilations.
The argument is taken from \cite{LeDonne_Rigot_BCP_graded_groups}. The complete proof will appear in \cite{Enrico&Seba}. 

\begin{proposition}\label{homogeneous distance topology}
Every homogeneous distance  induces the manifold topology.
\end{proposition}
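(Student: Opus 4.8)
The plan is to fix the identity $e$ and, using that both topologies are left-invariant, compare a neighborhood basis of $e$ in the metric topology with one in the manifold topology; it suffices to match these at $e$. Writing $\rho(p):=d(e,p)$, the distance axioms together with left-invariance and homogeneity make $\rho$ a \emph{homogeneous norm}: it is finite (since $d$ is a genuine distance), symmetric ($\rho(p^{-1})=\rho(p)$, from left-invariance and symmetry of $d$), subadditive ($\rho(pq)\le\rho(p)+\rho(q)$, from the triangle inequality), positive-definite ($\rho(p)=0\iff p=e$), and one-homogeneous ($\rho(\delta_\lambda p)=\lambda\rho(p)$). The statement then reduces to showing that $\rho$ is comparable to a reference gauge that manifestly induces the manifold topology.

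To build such a gauge I would pick a basis $e_1,\dots,e_n$ of $\g$ adapted to the grading, with $e_j\in V_{t_j}$, and use coordinates of the second kind $(u_1,\dots,u_n)\mapsto \exp(u_1e_1)\cdots\exp(u_ne_n)$, a diffeomorphism $\R^n\to G$ in which $\delta_\lambda$ acts diagonally as $(u_j)\mapsto(\lambda^{t_j}u_j)$ (using $\delta_\lambda\circ\exp=\exp\circ\,\delta_\lambda$ and $\delta_\lambda e_j=\lambda^{t_j}e_j$). Each factor satisfies $\exp(u_je_j)=\delta_{|u_j|^{1/t_j}}\big(\exp(\operatorname{sign}(u_j)e_j)\big)$, so homogeneity gives $\rho(\exp(u_je_j))=C_j^{\pm}\,|u_j|^{1/t_j}$ with $C_j^{\pm}:=\rho(\exp(\pm e_j))<\infty$. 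Subadditivity then yields the explicit upper bound
$$\rho(p)\le\omega(p):=\sum_{j=1}^n C_j\,|u_j(p)|^{1/t_j},\qquad C_j:=\max(C_j^+,C_j^-).$$
The function $\omega$ is continuous in the manifold coordinates, one-homogeneous, positive-definite, and coercive (its sublevel sets are Euclidean-bounded, hence compact), so $\omega$ induces the manifold topology and the sphere $\{\omega=1\}$ is compact.

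The inequality $\rho\le\omega$ already gives one inclusion: as $p\to e$ in the manifold topology the coordinates $u_j$ tend to $0$, so $\omega(p)\to0$ and hence $\rho(p)\to0$; combined with the reverse triangle inequality $|\rho(p)-\rho(q)|\le\rho(p^{-1}q)$ and continuity of the group operations, this shows $\rho$ is continuous for the manifold topology, i.e.\ every metric ball is manifold-open.

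The main obstacle is the opposite inclusion, for which I must bound $\rho$ from below by a positive multiple of $\omega$. By homogeneity it is enough to show $\rho\ge c>0$ on the compact sphere $\{\omega=1\}$. Suppose not: choose $p_n$ with $\omega(p_n)=1$ and $\rho(p_n)\to0$; by compactness pass to a subsequence with $p_n\to p_\infty$ in the manifold topology and $\omega(p_\infty)=1$, so $p_\infty\neq e$ and thus $\rho(p_\infty)>0$ by positive-definiteness. Since $p_n^{-1}p_\infty\to e$ in the manifold topology, the continuity established above gives $\rho(p_n^{-1}p_\infty)\to0$, whence subadditivity forces $\rho(p_\infty)\le\rho(p_n)+\rho(p_n^{-1}p_\infty)\to0$, a contradiction. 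This yields $c\,\omega\le\rho\le\omega$, so $\rho$ and $\omega$ induce the same topology, namely the manifold topology. The crux is exactly this final compactness argument, where positive-definiteness of $d$ is played against the topology through the reference gauge and the already-proven continuity.
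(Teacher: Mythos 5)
Your proof is correct and takes essentially the same route as the paper's: the upper bound $\rho\le\omega$ obtained from second-kind coordinates adapted to the grading, homogeneity, and subadditivity is exactly the paper's first half, and your compactness argument is its second half with the auxiliary Euclidean sphere replaced by the gauge sphere $\{\omega=1\}$. The only (harmless) repackaging is that you record the conclusion as the two-sided comparison $c\,\omega\le\rho\le\omega$ rather than as the sequential statement that $d(e,p_n)\to 0$ forces $p_n\to e$.
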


\begin{proof}[Proof in the case of standard dilations]
Let $G$ be a  group with identity element $e$ and Lie algebra graded by $\g=\oplus_{s>0} V_s $. Let $d$ be a distance homogeneous with respect to
the standard dilations.
It is enough to show that the topology induced by $d$ and the manifold topology give the same neighborhoods at the identity.

We show that if $p$ converges to $e$ then $d(e,p)\to 0$.
Since $G$ is 
nilpotent, we can consider
exponential coordinates of second kind with respect to a basis $X_1, 
\ldots, X_n$ of $\g$
adapted to the grading.
 Namely, first 
for all $i=1,\ldots, n$ there is
$d_i>0$ such that
 $X_i\in V_{d_i}$, and consequently, $ \delta_\lambda (X)= \lambda^{d_i} X$.
There is a diffeomorphism $p \mapsto (P_1(p) ,  \ldots,   P_n(p) )$  from $G$ to $\R^n$ such that
for all $p\in G$
$$p= \exp (P_1(p) X_1)  \cdot\ldots\cdot \exp (P_n(p) X_n).$$
Notice that $P_i(e)=0$.
Then, using the  triangle inequality, the left invariance, and the homogeneity of the distance, we get
\begin{equation*}
d(e,p)\leq   \sum_i   d(e, \exp (P_i(p) X_i) )  
=   \sum_i   | P_i(p) | ^{1/d_i}d(e, \exp (X_i) ) \to 0, \quad \text{ as } p\to e. 
\end{equation*} 

We show that if  $d(e,p_n)\to 0$ then $p_n$ converges to $e$. 
By contradiction and up to passing to a subsequence, there exists $\eps>0$ such that 
$\norm{p_n} >\eps$, where $\norm{\cdot}$ denotes any auxiliary Euclidean norm.
Since the map $\lambda\mapsto \norm{\delta_\lambda(q)}$ is continuous, for all $q\in G$, then for all $n$ there exists $\lambda_n \in (0,1)$ such that 
$\norm{ \delta_{\lambda_n}(p_n)} =\eps$.
Since the Euclidean $\eps$-sphere is compact, up to subsequence, 
$ \delta_{\lambda_n}(p_n)\to q$, with $\norm{q}=\eps$ so $q\neq e$ and so $d(e, q)>0.$
However,
$$0< d(e, q)\leq   d(e, \delta_{\lambda_n}(p_n))  +  d(\delta_{\lambda_n}(p_n), q) =   \lambda_n d(e,  p_n)  +  d(\delta_{\lambda_n}(p_n), q) 
\to 0,$$
where at the end we used the fact, proved in the first part of this proof, that if $q_n$ converges to $q$ then $d(q_n,q)\to 0$. 
\end{proof}

\begin{remark}
To deduce that a homogeneous distance is continuous is necessary to require that the automorphisms used in the definition are continuous with respect to the manifold topology. Indeed otherwise, consider the following example.
Via a group isomorphism from $\R$ to $\R^2$,   pull back the standard dilations and the Euclidean metric from $\R^2$ to $
\R$. In this way we get a one-parameter family of dilations and a homogeneous distance on $\R^2$.
Clearly, this distance gives to $\R^2$ the topology of the standard $\R$.
\end{remark}

\section{\textbf{Limits of Riemannian manifolds}}\label{sec:limit}

\subsubsection{A topology on the space of metric spaces}\label{sec:GH_top}

Let $X$ and $Y$ be metric spaces, $L>1$ and $C>0$.
A map $\phi:X\to Y$ is an \emph{$(L,C)$-quasi-isometric embedding} if for all $x,x'\in X$
\[
\frac1L d(x,x') - C
\le d(\phi(x),\phi(x')) \le 
L d(x,x') + C .
\]
If $A,B\subset Y$ are subsets of a metric space $Y$ and $\epsilon>0$, we say that $A$ is an \emph{$\epsilon$-net} for $B$ if 
\[
B\subset {\rm Nbhd}_\epsilon^Y(A):=\{ y\in Y:d(x,A) < \epsilon \}.
\]

\begin{Def}[Hausdorff approximating sequence]
 	Let $(X_j,x_j),(Y_j,y_j)$ be two sequences of pointed metric spaces.
	A sequence of maps $\phi_j:X_j\to Y_j$ with $\phi(x_j)=y_j$ is said to be \emph{Hausdorff approximating} if for all $R>0$ and all $\delta>0$ there exists $\epsilon_j $ such that
	\begin{enumerate}
	\item 	$\epsilon_j\to 0$ as $j\to \infty$;
	\item 	$\phi_j|_{B(x_i,R)}$ is a $(1,\epsilon_j)$-quasi isometric embedding;
	\item 	$\phi_j(B(x_j,R))$ is an $\epsilon_j$-net for $B(y_j,R-\delta)$.
	\end{enumerate}
\end{Def}

\begin{Def}
 	We say that a sequence of pointed metric spaces $(X_j,x_j)$ converges to a pointed metric space $(Y,y)$ if there exists an Hausdorff approximating sequence $\phi_j:(X_j,x_j)\to (Y,y)$.
\end{Def}
This notion of convergence was introduced by  Gromov \cite{Gromov-polygrowth} and it is also called \emph{Gromov-Hausdorff convergence}.
It defines a topology on the collection of (locally compact, pointed) metric spaces, which extends the notion of
uniform convergence on compact sets of distances on the same topological space.

\begin{Def}If $X=(X,d)$ is a metric space and $\lambda>0$, we set $\lambda X = (X,\lambda d)$.

 	Let $X,Y$ be  metric spaces, $x\in X$ and $y\in Y$.
	
	We say that $(Y,y)$ is the {\em asymptotic cone} of $X$ if  for all $\lambda_j\to0$, $(\lambda_j X,x)\to (Y,y)$.
	
	We say that $(Y,y)$ is the {\em tangent space} of $X$ at $x$ if for all $\lambda_j\to\infty$, $(\lambda_j X,x)\to (Y,y)$.
\end{Def}
\begin{Rem}
 	The notion of asymptotic cone is independent from $x$.
 	In general, asymptotic cones and tangent spaces may not exist.
 	In the space of boundedly compact metric spaces, limits are unique up to isometries.
\end{Rem}

\subsection{Tangents to CC-spaces: Mitchell Theorem}

The following result states that Carnot groups are infinitesimal models of CC-spaces:
the tangent metric space to an equiregular subFinsler manifold is a subFinsler Carnot group.
We recall that a subbundle $\Delta \subseteq TM$ is {\em equiregular} if for all $k\in \N$
$$\bigcup_{q\in M} 
{\rm span}\{[Y_1,[Y_2,[\dots [Y_{l-1},Y_l]]]]_q\;:\; l\leq k,Y_j\in\Gamma(\Delta),   j=1,\dots,l\}
$$
defines a subbundle of $TM$.
For example, on a Lie group $G$ any $G$-invariant subbundle is equiregular.

For the next theorem see \cite{Mitchell, bellaiche, 
Margulis-Mostow, Margulis-Mostow2, jeancontrol}. 

\begin{theorem}[Mitchell]\label{thm:Mitchell}
 	Let $M$ be an equiregular subFinsler manifold and $p\in M$.
	Then the tangent space of $M$ at $p$ exists and is a subFinsler Carnot group.
\end{theorem}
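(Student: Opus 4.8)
The plan is to construct the tangent cone explicitly as the \emph{nilpotent approximation} of $(M,\Delta,\norm{\cdot})$ at $p$, in the spirit of Bella\"iche, and then verify Gromov--Hausdorff convergence of the rescaled distances to the CC-distance of that approximation.

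First I would use equiregularity to build the flag of subbundles $\Delta = \Delta^1 \subseteq \Delta^2 \subseteq \cdots \subseteq \Delta^s = TM$ defined inductively by $\Delta^{k+1} = \Delta^k + [\Gamma(\Delta),\Gamma(\Delta^k)]$. Equiregularity guarantees that each $\Delta^k$ has locally constant rank $n_k$, so near $p$ I can choose a frame $X_1,\dots,X_n$ adapted to the flag, in the sense that $X_1,\dots,X_{n_k}$ spans $\Delta^k$ at each nearby point. Assign to $X_i$ the weight $w_i=k$ when $X_i$ is a section of $\Delta^k$ but not of $\Delta^{k-1}$ (with $\Delta^0=\{0\}$), and introduce privileged coordinates $(x_1,\dots,x_n)$ centered at $p$ via exponential coordinates of the second kind along this frame. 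In these coordinates define the anisotropic dilations $\delta_\lambda(x_1,\dots,x_n) = (\lambda^{w_1}x_1,\dots,\lambda^{w_n}x_n)$.

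Next I would extract the limit algebraic structure. Writing $X_i=\sum_j a_{ij}(x)\,\pa_{x_j}$, the key computation is that the rescaled fields $\lambda^{w_i}(\delta_{1/\lambda})_* X_i$ converge, as $\lambda\to\infty$, to vector fields $\hat X_i$ on $\R^n$ that are $\delta_\lambda$-homogeneous, namely the $\delta_\lambda$-homogeneous principal parts of the $X_i$. Because the coordinates are privileged, the brackets $[\hat X_i,\hat X_j]$ close up and the span $\hat\g$ of the $\hat X_i$ is a stratified nilpotent Lie algebra with degree-one layer $V_1=\Span\{\hat X_1,\dots,\hat X_{n_1}\}$ generating $\hat\g$. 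The corresponding connected, simply connected group $\hat G$, equipped with the left-invariant horizontal subbundle spanned by $\hat X_1,\dots,\hat X_{n_1}$ and the limiting norm of $\norm{\cdot}$ at $p$, is by construction a subFinsler Carnot group; this is the candidate tangent.

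Finally I would establish the metric convergence. Set $d_\lambda(x,y):=\lambda\, d(\delta_{1/\lambda}x,\delta_{1/\lambda}y)$ in coordinates and show $d_\lambda\to\hat d$ uniformly on compact sets, where $\hat d$ is the CC-distance of $\hat G$; since the rescaled spaces $\lambda M$ are identified near $p$ with $(\R^n,d_\lambda)$, this is exactly the required pointed convergence $(\lambda_j M,p)\to(\hat G,e)$, independently of the chosen sequence $\lambda_j\to\infty$. I expect the main obstacle to lie here. One must prove, uniformly, the Ball-Box estimate $d(p,x)\asymp\max_i|x_i|^{1/w_i}$ (which yields equiboundedness and equicontinuity of the family $d_\lambda$), and then show that horizontal curves admissible for $d_\lambda$ converge to horizontal curves for $\hat d$ with convergence of their lengths. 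The latter reduces to a continuous-dependence argument for ODEs, showing that the flows of the rescaled fields $\lambda^{w_i}(\delta_{1/\lambda})_*X_i$ converge to the flows of $\hat X_i$ uniformly on compact sets, combined with a lower-semicontinuity and reparametrization argument to match minimizers on both sides. As an alternative for identifying the limit, once one has shown it to be a locally compact geodesic space that is isometrically homogeneous and admits a dilation, one could invoke the metric characterization of Carnot groups; but the explicit nilpotent approximation is what makes the subFinsler structure transparent and is the route I would carry out.
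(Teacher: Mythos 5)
The paper does not prove this theorem: it states it as a known result and refers the reader to Mitchell, Bella\"iche, Margulis--Mostow and Jean for the proof. Your outline via privileged coordinates, nilpotent approximation, the uniform ball--box estimate, and convergence of the rescaled frame and of horizontal curves is precisely the standard argument carried out in those cited references, and you correctly identify where the real technical work lies, so this is essentially the same (implicitly referenced) approach.
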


\subsection{Asymptotic cones of nilmanifolds}
We point out another theorem showing how naturally Carnot groups appear in Geometric Group Theory.
The result is due to Pansu, and proofs can be found in \cite{Pansu-croissance, Breuillard-LeDonne1,Breuillard-LeDonne1_1}.
\begin{theorem}[Pansu]
 	Let $G$ be a nilpotent Lie group equipped with a left-invariant subFinsler distance.
	Then the asymptotic cone of $G$ exists and is a subFinsler Carnot group.
\end{theorem}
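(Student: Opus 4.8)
The plan is to identify the cone explicitly and then to invoke the metric characterization of Carnot groups. A preliminary reduction: since the subFinsler distance $d$ is finite on all of $G$, the horizontal subspace $V\subseteq\g$ carrying the norm $\norm{\cdot}$ must bracket-generate $\g$, and $(G,d)$ is then a complete, locally compact, geodesic, left-invariant metric Lie group. The crucial conceptual point is that the large-scale geometry is governed by the \emph{lower central series} rather than by the horizontal flag $V\subseteq V+[V,V]\subseteq\cdots$ that controls the tangent cone in Theorem~\ref{thm:Mitchell}. Accordingly the candidate limit is the Carnot group $\hat G$ whose Lie algebra is the associated graded $\hat\g=\bigoplus_{k\ge1}\g^{(k)}/\g^{(k+1)}$, which is stratified with degree-one layer $\g/[\g,\g]$; I would equip this first layer with the quotient norm induced from $\norm{\cdot}$ under the surjection $V\hookrightarrow\g\twoheadrightarrow\g/[\g,\g]$, making $(\hat G,\hat d)$ a subFinsler Carnot group.

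Next I would produce the cone as a Gromov--Hausdorff limit. Fixing a basis adapted to the lower central series gives exponential coordinates identifying both $G$ and $\hat G$ with $\R^n$, in which the grading dilations $\delta_t$ act diagonally. The first step is a \emph{ball--box estimate uniform at large scales}, of the shape $d(e,g)\asymp\sum_k\|g_k\|^{1/k}$, where $g_k$ is the component of $g$ in the $k$-th graded layer; this yields uniform doubling, so that for any $\lambda_j\to0$ the rescaled pointed spaces $(G,\lambda_j d,e)$ are precompact and admit subsequential limits by Gromov precompactness. Each such limit $(Y,y)$ is complete, locally compact and geodesic (these properties pass to limits of uniformly doubling length spaces), and it is isometrically homogeneous: the left translations $L_g$ are isometries of every $(G,\lambda_j d)$, and moving approximants of two given points of $Y$ by a suitable $L_g$ and passing to the limit produces a transitive family of isometries of $Y$.

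It then remains to show $Y$ admits a dilation and to identify it. The same large-scale estimate gives an approximate self-similarity $\tfrac1t\,d(e,\delta_t g)\to\hat d(e,g)$, which in the limit upgrades to a genuine metric dilation on $Y$; by the metric characterization (Theorem~\ref{characterization_Carnot}) this forces $Y$ to be a Carnot group. To pin down $Y=\hat G$ I would compare group laws: writing $[\,\cdot,\cdot\,]_t:=\delta_t^{-1}[\delta_t\cdot,\delta_t\cdot]$, the lower-central-series condition $[\g^{(i)},\g^{(j)}]\subseteq\g^{(i+j)}$ forces the nonprincipal terms to carry negative powers of $t$, so $[\,\cdot,\cdot\,]_t\to[\,\cdot,\cdot\,]_0$, the graded bracket, as $t\to\infty$; hence the rescaled group operations converge to those of $\hat G$, and, together with the identification of the first-layer norm as the quotient norm, this shows every subsequential limit is isometric to $(\hat G,\hat d)$. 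Independence of the subsequence then gives a genuine limit, so the asymptotic cone exists and equals the subFinsler Carnot group $\hat G$.

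The main obstacle is the analytic input in the second and third paragraphs: the uniform large-scale ball--box estimate and, equivalently, the existence of $\lim_{t\to\infty}\tfrac1t\,d(e,\delta_t g)$. This is precisely the substantive content of Pansu's theorem, and is most naturally handled by a subadditivity (Fekete) argument built on the left-invariance and the approximate homogeneity of $d$, followed by an interchange-of-limits argument promoting pointwise convergence to convergence uniform on compact sets. By contrast, the algebraic ingredients --- that $\hat\g$ is a stratified Lie algebra, the construction of the dilations, and the convergence $[\,\cdot,\cdot\,]_t\to[\,\cdot,\cdot\,]_0$ --- are routine.
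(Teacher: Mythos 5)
The paper itself offers no proof of this statement --- it is quoted with references to \cite{Pansu-croissance, Breuillard-LeDonne1, Breuillard-LeDonne1_1} --- so your proposal can only be measured against the arguments there. Your architecture does match the standard strategy: the correct candidate limit $\hat G$ built from the associated graded algebra $\hat\g=\bigoplus_k\g^{(k)}/\g^{(k+1)}$ of the lower central series (not the horizontal flag of Theorem~\ref{thm:Mitchell}), the quotient norm on $\g/[\g,\g]$, precompactness of the rescalings, homogeneity of subsequential limits, and identification via convergence of the rescaled brackets. Invoking Theorem~\ref{characterization_Carnot} is also legitimate and non-circular, since that theorem rests on Gleason--Montgomery--Zippin, Berestovskii and Mitchell rather than on Pansu.

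There is, however, a genuine gap at exactly the step you flag as the main obstacle, and the mechanism you propose for it does not work as stated. The function $t\mapsto d(e,\delta_t g)$ is \emph{not} subadditive: subadditivity of $n\mapsto d(e,g^n)$ follows from left-invariance because $g^{n+m}=g^n\cdot g^m$, but $\delta_{t+s}g$ does not factor as $\delta_t g$ times an element of length $d(e,\delta_s g)$, precisely because $\delta_t$ is an automorphism of $\hat G$ and not of $G$. Fekete therefore only yields the stable norm along one-parameter subgroups $t\mapsto\exp(tX)$, i.e.\ the abelianized picture; promoting this to the uniform convergence $\tfrac1t d(e,\delta_t g)\to\hat d(e,g)$ is where Pansu and Breuillard--Le Donne spend essentially all of their effort (induction on the nilpotency step, with quantitative control on the failure of $\delta_t$ to be multiplicative). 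Relatedly, the large-scale ball--box comparison $d(e,g)\asymp\sum_k\norm{g_k}^{1/k}$ is Guivarc'h's two-sided estimate and is strictly weaker than the existence of the limit, not ``equivalent'' to it: it gives doubling and Gromov precompactness but no convergence. Since every step that identifies the subsequential limit as $\hat G$ --- the genuine dilation on $Y$, the identification of the first-layer norm, and the independence of the subsequence --- reuses the missing asymptotic estimate, what your argument actually establishes is only that subsequential asymptotic cones exist and are locally compact, geodesic, isometrically homogeneous spaces.
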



\section{\textbf{Isometrically homogeneous geodesic manifolds}}
\label{sec:Isometrically}

\subsection{Homogeneous metric spaces}

Let $X=(X,d)$ be a metric space.
Let ${\rm Iso}(X)$ be the 
group of self-isometries of $X$, i.e., distance-preserving homeomorphisms of $X$.

\begin{definition}\label{def:isometrically}
A metric space 
$X$ is {\em isometrically homogeneous} (or is a {\em homogeneous metric space}) if
for all $x,x'\in X$ there exists $f\in {\rm Iso}(X)$
 with $f(x)=x'$.
\end{definition}

The main examples are the following. Let $G$ be Lie group and $H<G$ compact subgroup.
So the collection of the right cosets $G/H:=\{ gH : g\in G\}$ is an analytic manifold on which $G$ acts (on the left) by analytic maps.
Since $H$ is compact, it always exists a $G$-invariant distance $d$ on $G/H$.

Let us present some regularity results for isometries of such spaces. Later, in Section~\ref{SecIsom1} we shall give more details.

\begin{theorem}[LD,  Ottazzi]
The isometries of the above examples are analytic maps.
\end{theorem}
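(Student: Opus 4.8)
The plan is to deduce analyticity not one isometry at a time but for the whole isometry group at once, by identifying $L := {\rm Iso}(M)$, where $M = G/H$, as a Lie group acting analytically on $M$; once this is done, each $f\in L$ is by construction the map $p\mapsto f\cdot p$ coming from an analytic action, hence analytic. The strategy thus reduces the regularity of individual distance-preserving homeomorphisms to the structure theory of locally compact transformation groups, i.e. to the solution of Hilbert's fifth problem.

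First I would establish that $L$, endowed with the topology of uniform convergence on compact sets, is a locally compact group acting continuously and properly on $M$. Here one uses that $M$ is a connected (restricting to a connected component if necessary), locally compact metric space whose $G$-invariant distance is boundedly compact, so that the van Dantzig--van der Waerden theorem applies: the isometry group of such a space is locally compact and its action is proper. The Arzel\`a--Ascoli theorem is the mechanism underlying this, since families of isometries are equicontinuous and pointwise relatively compact.

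Next I would invoke homogeneity: by Definition~\ref{def:isometrically} the space is isometrically homogeneous, so $L$ acts transitively on $M$, and it acts effectively by definition of ${\rm Iso}(M)$. At this point the key input is the Montgomery--Zippin theorem, a corollary of the Gleason--Yamabe structure theorem: a locally compact group acting continuously and effectively on a connected, finite-dimensional manifold has no small subgroups and is therefore a Lie group. This is the heart of the argument and the step I expect to be the main obstacle, as it rests on the full no-small-subgroups machinery solving Hilbert's fifth problem for transformation groups; the steps before and after it are comparatively formal.

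Finally, knowing that $L$ is a Lie group acting continuously and transitively on $M$, I would identify the stabilizer $L_p$ of a point $p$ as a closed---hence Lie---subgroup, so that the orbit map induces an $L$-equivariant analytic diffeomorphism $L/L_p \cong M$. Consequently the action map $L\times M\to M$ is analytic, and every $f\in {\rm Iso}(M)$ acts as an analytic diffeomorphism of $M$, which is the claim. In the subRiemannian or subFinsler refinements one can pin down the stabilizer more precisely, but for the stated analyticity the identification of $M$ as a homogeneous space of the Lie group $L$ already suffices.
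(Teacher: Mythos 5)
Your route is essentially the paper's: local compactness of the isometry group via Arzel\`a--Ascoli (van Dantzig--van der Waerden), the Gleason--Montgomery--Zippin solution of Hilbert's fifth problem to conclude that $L={\rm Iso}(M)$ is a Lie group, and then the realization of $M$ as the homogeneous space $L/L_p$ to read off analyticity of the action; this is exactly the combination of Theorem~\ref{thm:GMZ} and Theorem~\ref{compactness} in the text.

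The one step you assert rather than prove is the final identification: you say the orbit map gives an $L$-equivariant \emph{analytic diffeomorphism} $L/L_p\cong M$, but a priori it is only an equivariant homeomorphism, and the theorem asks for analyticity with respect to the \emph{given} analytic structure on $M=G/H$, not the one manufactured from $L$. To close this you need the uniqueness of analytic structures on homogeneous Lie spaces: $G$ embeds in $L$ as a Lie subgroup acting transitively, and it acts analytically both on $G/H$ with its original structure and on $L/L_p$ with the new one; since the identity map between these two models is $G$-equivariant, uniqueness forces it to be an analytic diffeomorphism. This is precisely the second ingredient the paper invokes (Proposition~4.5 of \cite{LeDonne-Ottazzi}) alongside Theorem~\ref{thm:GMZ}; without it your last paragraph only shows that $M$ admits \emph{some} analytic structure for which the isometries are analytic.
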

The above result is due to the author and Ottazzi, see \cite{LeDonne-Ottazzi}.
The argument is based on the structure theory of locally compact groups, see next two sections. 
Later, in Theorem~\ref{compactness}, we will discuss a  stronger statement.

We present now an immediate consequence saying that in these examples the isometries  are also Riemannian isometries for some Riemannian structure.
In general, the Riemannian isometry group may be larger, e.g., in the case of $\R^2$ with $\ell_1$-norm.
\begin{corollary}
For each   $G$-invariant distance $d$ on $G/H$ there is a Riemannian  $G$-invariant distance $d_R$ on $G/H$ such that
 $${\rm Iso}(G/H,d) <   {\rm Iso}(G/H,d_R).$$
\end{corollary}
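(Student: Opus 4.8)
The plan is to realize $M := G/H$ as a homogeneous space of its own isometry group and then average an inner product over the (compact) isotropy subgroup, which is the standard way one produces an invariant Riemannian metric on a homogeneous manifold. Write $o = eH$ for the base point and set $I := {\rm Iso}(M,d)$. Since $d$ is $G$-invariant, the $G$-action yields a homomorphism $G\to I$ whose image already acts transitively on $M$; in particular $I$ acts transitively, and $G\hookrightarrow I$. By the preceding theorem every element of $I$ is an analytic diffeomorphism of $M$, and, by the structure theory of locally compact groups on which that theorem rests, $I$ with the compact-open topology is a Lie group acting analytically on $M$, with $M\cong I/K$ as analytic manifolds, where $K := \{ f\in I : f(o)=o\}$. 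The isometric action of the locally compact group $I$ on the locally compact space $M$ is proper, so the stabilizer $K$ is compact.

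Next I would average. The isotropy representation $K\to \mathrm{GL}(T_o M)$, $k\mapsto (\mathrm d k)_o$, is a continuous representation of the compact group $K$. Starting from an arbitrary inner product $\langle\cdot,\cdot\rangle$ on the finite-dimensional space $T_o M$ and integrating against the normalized Haar measure of $K$,
$$ \langle u,v\rangle_o := \int_K \langle (\mathrm d k)_o u,\, (\mathrm d k)_o v\rangle \, \mathrm d k, \qquad u,v\in T_o M, $$
produces an inner product on $T_o M$ that is invariant under the isotropy representation of $K$.

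Finally I would spread this inner product over $M$ using the transitive $I$-action. For $p\in M$ pick $f\in I$ with $f(o)=p$ and set
$$ g_p(u,v) := \langle (\mathrm d f^{-1})_p u,\, (\mathrm d f^{-1})_p v\rangle_o, \qquad u,v\in T_p M. $$
This is independent of the choice of $f$: if $f(o)=f'(o)$ then $f^{-1}f'\in K$, and the $K$-invariance of $\langle\cdot,\cdot\rangle_o$ forces the two definitions to agree. Because $I$ acts analytically and transitively, $g$ is a (smooth, indeed analytic) $I$-invariant Riemannian metric on $M$. Let $d_R$ be the associated Riemannian distance. Then $d_R$ is $I$-invariant; since $G\hookrightarrow I$, it is in particular $G$-invariant, and since every element of ${\rm Iso}(M,d)=I$ preserves $g$ it also preserves $d_R$, yielding ${\rm Iso}(M,d) < {\rm Iso}(M,d_R)$, as desired.

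The only nonelementary inputs are that $I$ is a Lie group acting analytically with compact point stabilizers; these are precisely the content of the previous theorem and of the structure theory it invokes, so once they are granted the argument reduces to the routine averaging construction above. The one point that genuinely requires care is the well-definedness of $g$, which is exactly why one must average the inner product over $K$ \emph{before} spreading it by the group action rather than after.
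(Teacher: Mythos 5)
Your proposal is correct and follows essentially the same route as the paper: fix a base point, use compactness of the stabilizer in ${\rm Iso}(G/H,d)$ (via Ascoli--Arzel\`a/properness) to average an inner product over its Haar measure, then transport the resulting invariant inner product to all of $G/H$ by the transitive isometric action. The extra details you supply (well-definedness of the spread metric, smoothness from analyticity of the action) are exactly the points the paper's sketch leaves implicit.
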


\proof[Sketch of the proof of the corollary]

Fix $p \in G/H$ and $\rho_p$ a scalar product on
$T_p(G/H)$.
Let
$K$ be the stabilizer of $p$ in 
${\rm Iso}(G/H,d)$, which is compact by Ascoli-Arzel\`a.
Let 
$\mu$ be a Haar measure on $K$.
Set
$$\tilde \rho_p=\int_K F^*  \rho_p  \; \mathrm d \mu(F)$$
 and
 for all $q\in G/H$ set $\tilde \rho_q= F^*\tilde \rho_p$, 
  for some $F\in  {\rm Iso}(G/H,d)$ with
$F(q)=p$.
Then $\tilde \rho$ is a Riemannian metric tensor that 
is ${\rm Iso}(G/H,d)$-invariant.
\qed

\subsection{Berestovskii's characterization}

Carnot groups are examples of isometrically homogeneous spaces. In addition, their CC-distances have the property of having been constructed by a length structure.
Hence, these distances are intrinsic in the sense of the following definition. For more information on length structures and intrinsic distances, see \cite{Burago:book}. 

\begin{definition}[Intrinsic distance]\label{def:intrinsic}
A distance $d$ on a set $X$ is {\em intrinsic} if for all $x,x'\in X$ 
$$d(x,x') = \inf \{ L_d(\gamma)\},$$
where the infimum is over all curves $\gamma \in C^0([0,1];X)$ from $x$ to $x'$ and  $$L_d(\gamma):=\sup \sum d(\gamma(t_i),\gamma(t_{i-1}) ),$$ where the supremum is over all partitions $t_0< \ldots< t_k$ of $[0,1]$.  
\end{definition}

Examples of intrinsic distances are given by length structures,
subRiemannian structures,
Carnot-Carath\'eodory spaces, and 
geodesic spaces. A metric space whose distance is intrinsic is called {\em geodesic} if the infimum in   Definition~\ref{def:intrinsic} is attained.

The significance of the next result is that 
CC-spaces are natural objects in the theory of homogeneous metric spaces. 
\begin{theorem}[Berestovskii, \cite{b}] \label{thm:Berestovskii}
If a homogeneous Lie space $G/H$ is equipped with a $G$-invariant intrinsic distance $d$,
then
$d$ is subFinsler, i.e., there exist a $G$-invariant subbundle $\Delta$
and a $G$-invariant norm $\norm{\cdot}$ such that  $d$ is the CC distance associated with $\Delta$
and $\norm{\cdot}$.
\end{theorem}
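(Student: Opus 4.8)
The plan is to reconstruct the sub-Finsler data $(\Delta,\norm{\cdot})$ from the infinitesimal behaviour of $d$ at the base point $o:=eH$ and then to propagate it by the transitive $G$-action. Set $M:=G/H$, $\g:=\Lie(G)$ and $\h:=\Lie(H)$. Since $H$ is compact, I would first fix an $\Ad(H)$-invariant complement $\mathfrak{m}$ to $\h$ in $\g$, so that the differential at $e$ of the orbit map $g\mapsto g\cdot o$ identifies $\mathfrak{m}$ with $T_oM$. For $X\in\mathfrak{m}$, the triangle inequality together with the $G$-invariance of $d$ makes $t\mapsto d(o,\exp(tX)\cdot o)$ subadditive, so that
\[
F_o(X):=\lim_{t\to0^+}\frac{d(o,\exp(tX)\cdot o)}{t}=\inf_{t>0}\frac{d(o,\exp(tX)\cdot o)}{t}\in[0,+\infty]
\]
is well defined. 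Applying the isometry $\exp(tX)$ gives $F_o(-X)=F_o(X)$, and rescaling $t$ gives $F_o(sX)=|s|\,F_o(X)$ for $s\in\R$.

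The core of the proof is to show that $F_o$ is convex, so that $\Delta_o:=\{X\in\mathfrak{m}:F_o(X)<\infty\}$ is a linear subspace and $F_o|_{\Delta_o}$ is a norm. This is exactly where the hypothesis that $d$ is intrinsic is indispensable. To bound $F_o(X+Y)$ I would, for fixed small $t$, approximate $\exp(t(X+Y))\cdot o$ by a \emph{staircase} of $N$ alternating short segments in the directions $X$ and $Y$, of parameter length $t/N$ each. Left-invariance and the triangle inequality bound the $d$-length of the staircase by $t\big(F_o(X)+F_o(Y)\big)(1+o(1))$, while the Baker--Campbell--Hausdorff formula shows that its endpoint differs from $\exp(t(X+Y))\cdot o$ by a second-order error that accumulates only to $O(t^2/N)$ and therefore disappears as $N\to\infty$. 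Since $d$ is intrinsic, $d(o,\cdot)$ between the two endpoints is at most the staircase length; letting $N\to\infty$ and then dividing by $t$ yields $F_o(X+Y)\le F_o(X)+F_o(Y)$. The delicate point, which I expect to be the main obstacle, is precisely the control of these second-order corrections: a single step produces a vertical drift of size $O((t/N)^2)$ whose a priori distance cost is of the same first order $O(t/N)$ as the horizontal motion, and it is only the intrinsic (length) nature of $d$, exploited through refinement, that renders the accumulated error negligible.

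Having obtained a convex, symmetric, one-homogeneous $F_o$, I would next propagate it. Every element of $H$ acts by an isometry fixing $o$, so both $\Delta_o$ and $F_o$ are $\Ad(H)$-invariant and hence descend to well-defined data at the point $o\in G/H$; transporting them by the $G$-action produces a $G$-invariant subbundle $\Delta\subseteq TM$ and a $G$-invariant norm $\norm{\cdot}$ on it. Smoothness of $\Delta$ and continuity of $\norm{\cdot}$ follow from $G$-invariance, the fixed convex body $\{F_o\le1\}\subseteq T_oM$ being moved around analytically by the $G$-action.

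Finally I would identify $d$ with the CC-distance $d_{CC}$ of \eqref{dist_CC-Carnot} associated with $(\Delta,\norm{\cdot})$. The two inequalities both rest on upgrading the pointwise statement above to curves: by $G$-invariance the $d$-metric speed of an absolutely continuous curve $\gamma$ equals $\norm{\dot\gamma}$ almost everywhere (with the value $+\infty$ where $\dot\gamma\notin\Delta$), and for a length distance one has $L_d(\gamma)=\int_0^1 v_d(\gamma)(t)\,dt$. For $d\le d_{CC}$ I take a horizontal curve $\gamma$ and combine $L_d(\gamma)=\int\norm{\dot\gamma}\,dt$ with the intrinsic bound $d(p,q)\le L_d(\gamma)$. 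For $d_{CC}\le d$ I take curves joining $p$ to $q$ with $L_d(\gamma)$ arbitrarily close to $d(p,q)$; finiteness of $L_d(\gamma)$ forces $\dot\gamma\in\Delta$ almost everywhere, so $\gamma$ is admissible in \eqref{dist_CC-Carnot} and $d_{CC}(p,q)\le\int\norm{\dot\gamma}\,dt=L_d(\gamma)$. Since the given $d$ is a genuine (finite) distance, the resulting $\Delta$ is automatically bracket-generating, so that $d_{CC}$ is indeed a CC-distance in the sense of this paper.
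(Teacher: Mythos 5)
Your route is genuinely different from the one sketched in the paper. The paper's proof proceeds in the order: (1) show that locally $d\geq c\, d_R$ for some Riemannian distance $d_R$; (2) deduce that curves of finite $d$-length are rectifiable in the manifold sense and \emph{define} $\Delta$ and $\norm{\cdot}$ from the velocities of such curves; (3) identify $d$ with the resulting CC-distance. You instead build the Finsler data first, as the infinitesimal dilation $F_o(X)=\lim_{t\to0^+}d(o,\exp(tX)\cdot o)/t$ along one-parameter subgroups, with convexity obtained by a Trotter-type staircase argument; this is attractive because it produces the norm explicitly and isolates where the length hypothesis enters. The Fekete/subadditivity part, the symmetry and homogeneity of $F_o$, the $\Ad(H)$-invariance, and the staircase scheme (including your correct identification of the second-order drift as the delicate point, and the correct order of limits $N\to\infty$ before $t\to0$) are all sound, granted continuity of $d$ with respect to the manifold topology.

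The genuine gap is that you never establish the non-degeneracy of $F_o$ and, with it, the local comparison $d\geq c\,d_R$ --- which is exactly what the paper singles out as ``the core of the argument'' (its Step 1) --- and your final paragraph silently relies on it. Concretely: (a) you need $F_o(X)>0$ for $X\in\mathfrak{m}\setminus\{0\}$, and in fact $\inf\{F_o(X):|X|=1\}>0$, before $F_o$ restricted to $\Delta_o$ is a norm. This is provable from your own setup (Fekete gives $d(o,\exp(X)\cdot o)\geq F_o(X)$, so $F_o(X)=0$ forces $\exp(\R X)\subset H$, impossible for $0\neq X\in\mathfrak{m}$; continuity of the finite convex function $F_o$ on $\Delta_o$ then gives the uniform bound, hence $d\geq c\,d_R$ locally), but it must be said. (b) Without that lower bound, the sentence ``finiteness of $L_d(\gamma)$ forces $\dot\gamma\in\Delta$ a.e.'' is vacuous: a curve of finite $d$-length is a priori not even differentiable anywhere in the manifold sense, so $\dot\gamma$ is undefined; the Riemannian lower bound is what makes such curves Lipschitz in charts after arc-length reparametrization. (c) The claim that ``by $G$-invariance the $d$-metric speed of an absolutely continuous curve equals $\norm{\dot\gamma}$ a.e.'' hides precisely the second-order-drift problem you flagged in the convexity step: $\gamma(t+h)$ differs from $\gamma(t)\cdot\exp(hX_t)$ by $o(h)$ in the manifold, but the $d$-cost of an $o(h)$ manifold displacement need not be $o(h)$ (think of the vertical direction in the Heisenberg group, where it is only $o(\sqrt h)$). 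In the staircase you disposed of this by letting $N\to\infty$ first; in the metric-speed identity there is no such parameter, and one needs either a quantitative upper modulus of continuity of $d$ in terms of $d_R$ or a further subdivision/refinement argument for $L_d(\gamma)$. Until (a)--(c) are supplied, the identification $d=d_{CC}$ is not proved.
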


\proof[How to prove Berestovskii's result]
In three steps:
\begin{itemize}
\item[Step 1.] Show that locally $d$ is $\geq$ to some Riemannian distance $d_R$.

\item[Step 2.] Deduce that  curves that have finite length with respect to $d$ are Euclidean rectifiable and define the horizontal bundle $\Delta$ using velocities of  such curves. Definite $\norm{\cdot}$ similarly.

\item[Step 3.] Conclude that $d$ is the CC distance for $\Delta$
and $\norm{\cdot}$.
 \qed
\end{itemize}

The core of the argument is in Step 1. Hence we describe its proof in a exemplary case.
\proof[Simplified proof of Step 1]
We only consider the following simplification: $G=G/H=\R^2$, i.e.,  $d$ is a translation invariant distance on the plane.
Let $d_E$ be the Euclidean distance.
We want to show that $d_E/d$ is locally bounded.
If not, there exists $p_n\to0$ such that
$\dfrac{d_E(p_n,0)}{d(p_n,0)} >n$.
Since the
topologies are the same, there exists $r>0$ such that
$B_d(0,r) \Subset B_E(0,1) $.
Since $p_n\to0$, there is $h_n\in \N$ such that eventually
$h_np_n\in B_E(0,1) \setminus B_d(0,r)$.
Hence,
\begin{eqnarray*}
0<r<d(h_n p_n,0) = h_n d(p_n,0) &\leq& \dfrac{h_n}{n} d_E(p_n,0)\\
&=&\dfrac{1}{n} d_E(h_np_n,0)\leq \dfrac{1}{n} \to 0,
\end{eqnarray*}
which gives a contradiction.\qed

 \section{\textbf{A metric characterization of Carnot groups}}
 \label{sec:characterization}
\subsection{Characterizations of Lie groups}  

Providing characterization of Lie groups among topological groups was one of the 
Hilbert problems: the 5$^{th}$ one.
Nowadays, it is considered solved in various forms by the work of 
John von Neumann,
   Lev Pontryagin,
    Andrew Gleason, Deane Montgomery, Leo Zippin,
and Hidehiko Yamabe. See \cite{mz} and references therein. For the purpose of characterizing Carnot groups among homogeneous metric spaces, we shall make use of the following.

\begin{theorem}[Gleason - Montgomery - Zippin. 1950's]\label{thm:GMZ}
Let $X$ be a metric space that is connected, locally connected, locally compact, of finite topological dimension, and isometrically homogeneous. Then its isometry group ${\rm Iso}(X)$ is a Lie group.
\end{theorem}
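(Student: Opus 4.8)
The plan is to realize ${\rm Iso}(X)$ as a locally compact topological transformation group of $X$ and then to invoke the transformation-group form of the solution of Hilbert's fifth problem. First I would equip ${\rm Iso}(X)$ with the compact-open topology, which for a metric space coincides with the topology of uniform convergence on compact sets. With this topology ${\rm Iso}(X)$ is a topological group and the evaluation action ${\rm Iso}(X)\times X\to X$ is continuous; it is effective by definition (distinct isometries are distinct as maps) and transitive by the hypothesis that $X$ is isometrically homogeneous. This already exhibits the situation of a topological group acting on $X$; the entire task is to upgrade ``topological'' to ``locally compact'' so that the deep structure theory applies.

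The crux is therefore to prove that ${\rm Iso}(X)$ is locally compact, and for this I would use that every isometry is $1$-Lipschitz, so any family of isometries is automatically equicontinuous. The preliminary point is that homogeneity combined with local compactness yields \emph{uniform} local compactness: a compact closed ball of some fixed radius $r$ about one point can be transported by isometries to every point, so $\bar B(x,r)$ is compact for all $x\in X$. Uniform local compactness in turn forces completeness (a Cauchy sequence is eventually trapped in a compact ball) and, through a covering/chaining argument, properness, i.e.\ compactness of all closed bounded sets. Granting properness, fix a basepoint $x_0$ and consider a sequence $f_n$ of isometries with $f_n(x_0)$ ranging in a bounded set; then for every $x$ the orbit $\{f_n(x)\}$ lies in a fixed compact ball, so Arzel\`a--Ascoli produces a subsequence converging uniformly on compacta to a $1$-Lipschitz limit. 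Applying the same extraction to $f_n^{-1}$ shows the limit is an invertible isometry, whence a suitable neighborhood of the identity in ${\rm Iso}(X)$ is precompact and ${\rm Iso}(X)$ is locally compact.

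At this stage ${\rm Iso}(X)$ is a locally compact group acting effectively, continuously and transitively on the space $X$, which is connected, locally connected, locally compact and of finite topological dimension. I would then conclude by the Montgomery--Zippin transformation-group theorem, the transformation-group formulation of the Gleason--Yamabe solution of Hilbert's fifth problem (see \cite{mz}): a locally compact group acting effectively and transitively on such a space is necessarily a Lie group. I expect the main obstacle to be precisely the local-compactness step, and within it the passage to properness of $X$: arranging that the orbit maps have precompact images, so that Arzel\`a--Ascoli is applicable, is where the hypotheses of local compactness and homogeneity must be combined with care. Once that is in place, the final appeal to the structure theory of locally compact groups is a black box whose hypotheses we have deliberately arranged to hold.
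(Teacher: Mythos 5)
Your overall architecture is the same as the paper's: Step~1 is local compactness of ${\rm Iso}(X)$ in the compact-open topology via Arzel\`a--Ascoli, and everything after that is the Montgomery--Zippin structure theory for locally compact transformation groups, which you invoke as a black box from \cite{mz} while the paper unpacks it (approximation of an open-closed subgroup by Lie groups via Peter--Weyl and Gleason, stabilization of the inverse limit using finite topological dimension and local connectedness, and the no-small-subgroups criterion). That difference is one of granularity, not of substance.

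The genuine problem is in the step you yourself single out as the crux. The implication ``uniform local compactness $\Rightarrow$ properness'' is false, even under all the hypotheses of the theorem: take $X=\R$ with the distance $\min(|x-y|,1)$ (or remetrize any homogeneous example by $d/(1+d)$). This space is connected, locally connected, locally compact, one-dimensional and isometrically homogeneous, and every closed ball of radius less than $1$ is compact; yet $\bar{B}(0,1)=\R$ is closed, bounded and non-compact, so no chaining argument can produce properness. Your Arzel\`a--Ascoli step then fails as written: for $f_n$ equal to translation by $n$, the points $f_n(0)$ stay in a bounded set while $\{f_n(x)\}$ is not precompact for any $x$, and the set $\{f: d(f(x_0),x_0)\le 1\}$ is not compact. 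What is true, and what Step~1 really rests on, is the van Dantzig--van der Waerden theorem: for a connected locally compact metric space, ${\rm Iso}(X)$ is locally compact and $\{f: f(x_0)\in K\}$ is compact whenever $K$ is a compact neighborhood of $x_0$. Its proof does not pass through properness of $X$; one shows directly that the set of points $x$ for which $\{f(x): f(x_0)\in K\}$ is precompact is open, closed and nonempty, hence all of $X$ by connectedness, chaining inside balls of radius smaller than the isometry-invariant, $1$-Lipschitz function $r(x)=\sup\{\rho:\bar{B}(x,\rho)\ \mbox{is compact}\}$. With that replacement your argument closes and coincides with the paper's.
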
 
As a consequence, the isometrically homogeneous metric spaces considered in 
Theorem~\ref{thm:GMZ} are all of the form discussed 
in Section~\ref{sec:Isometrically}
after Definition~\ref{def:isometrically}.

\proof[How to prove  Theorem \ref{thm:GMZ}]
In many steps:

\begin{itemize}
\item[Step 1.]  ${\rm Iso}(X)$ is locally compact, by Ascoli-Arzel\'a.

\item[Step 2.]  Main Approximation Theorem:
There exists an open and closed subgroup $G$ of ${\rm Iso}(X)$ that can be approximated by Lie groups:
$$G= \varprojlim G_i,\quad \text{ (inverse limit of continuous epimorphisms with compact kernel)}$$
by Peter-Weyl and Gleason.

\item[Step 3.] $G\acts X$  transitively, by Baire; so
$X=G/H = \varprojlim  G_i/H_i$.

\item[Step 4.]  Since the topological dimension of $X$ is finite and $X$ is locally connected, for $i$ large 
$G_i/H_i \to G/H$ is a homeomorphism.

\item[Step 5.] $G=G_i$  for $i$ large, so $G$ is a Lie group so $G$ is NSS, i.e., $G$ has no small subgroups.

\item[Step 6.]  $G$ is NSS so ${\rm Iso}(X)$ is NSS

\item[Step 7.]  Locally compact groups with NSS are Lie groups, by Gleason.
\qed
\end{itemize}

\subsection{A metric characterization of Carnot groups} 

In what follows, we  say that  a metric space $(X,d)$ is {\em  self-similar} if there exists $\lambda>1$ 
such that the metric space $(X,d)$ is isometric to the metric space $(X,\lambda d)$.
In other words, there exists 
 a homeomorphism  $f:X\to X$ such that $$d(f(p), f(q) )=\lambda d(p,q) ,\text{  for all }p,q\in X.$$
When this happens for all  $\lambda>0$ (and all maps $f=f_\lambda$ fix a point) $X$ is said to be a {\em cone}.
Homogeneous groups are examples of cones.

The following result is a corollary  of the work of  Gleason-Montgomery-Zippin, Berestovskii, and Mitchell, see \cite{LeDonne_characterization}. It gives a metric characterization of Carnot groups.
 \begin{theorem}\label{characterization_Carnot}
SubFinsler Carnot groups are the only metric spaces that are  
\begin{enumerate}
\item  locally compact, 
\item  geodesic, 
  \item isometrically homogeneous, and
   \item  self-similar.
\end{enumerate}
 \end{theorem}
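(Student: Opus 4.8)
The plan is to prove both inclusions. For the easy direction I would check that every subFinsler Carnot group $(G,d)$ enjoys the four properties: local compactness is immediate since $G$ is a finite-dimensional manifold; $d$ is geodesic because it is defined by the length structure \eqref{dist_CC-Carnot} whose horizontal bundle is bracket-generating, so the infimum is attained; isometric homogeneity holds because left translations act transitively by isometries; and self-similarity is witnessed by the dilations, since $d(\delta_\lambda p,\delta_\lambda q)=\lambda d(p,q)$ shows $(G,d)$ is isometric to $(G,\lambda d)$ (in fact $G$ is a cone).

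For the converse, suppose $(X,d)$ satisfies (1)--(4); the goal is to apply Theorem~\ref{thm:GMZ}, so I must verify its hypotheses. Being geodesic, $X$ is path-connected, hence connected; moreover every open ball $B(x,r)$ is path-connected, because any $y\in B(x,r)$ is joined to $x$ by a geodesic all of whose points lie within distance $<r$ of $x$, so $X$ is locally connected. The one delicate hypothesis is finite topological dimension. Here I would first use self-similarity to upgrade local compactness to bounded compactness (the scaling homeomorphism carries compact small balls onto large balls homeomorphically), and then combine homogeneity with self-similarity to establish that $X$ is metrically doubling: a single-scale packing bound, finite by bounded compactness and center-independent by homogeneity, is propagated to all scales by the powers of the self-similar map. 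Doubling implies finite Assouad dimension and hence finite topological dimension. Theorem~\ref{thm:GMZ} then yields that $G:={\rm Iso}(X)$ is a Lie group acting transitively, so $X=G/H$ with $H$ the compact stabilizer of a point.

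Next, the distance $d$ is $G$-invariant (it is the invariant metric of its own isometry group) and, being geodesic, is intrinsic in the sense of Definition~\ref{def:intrinsic}; hence Berestovskii's Theorem~\ref{thm:Berestovskii} shows $d$ is subFinsler, given by a $G$-invariant subbundle $\Delta$ and a $G$-invariant norm. Since $\Delta$ is $G$-invariant and $G$ acts transitively, the flag of iterated brackets has constant rank, so $\Delta$ is equiregular and $X$ is an equiregular subFinsler manifold. Now I would bring in self-similarity and Mitchell's theorem together: iterating the self-similarity homeomorphism $f$ of factor $\lambda>1$ gives isometries $(X,d)\cong(X,\lambda^j d)$ for all $j$, and composing with translations and using homogeneity gives $(\lambda^j X,x)\cong(X,x)$ for every $j$. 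Taking $\lambda_j=\lambda^j\to\infty$, the blow-up sequence defining the tangent of $X$ at $x$ is constantly isometric to $(X,x)$, so the tangent equals $(X,x)$. By Theorem~\ref{thm:Mitchell} this tangent is a subFinsler Carnot group, and therefore so is $(X,d)$.

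The hard part will be the finite-dimensionality step: converting the purely qualitative data ``locally compact, homogeneous, self-similar'' into a genuine doubling estimate, and in particular handling the fact that self-similarity is assumed only for a single factor $\lambda>1$ rather than for a full one-parameter family. The key is that the covering numbers are constant along the multiplicative orbit $\{\lambda^j r\}$, so a uniform bound on one compact scale-interval, obtained from bounded compactness, suffices to bound them at every scale; once doubling is in hand, the remaining steps are applications of the cited theorems of Gleason--Montgomery--Zippin, Berestovskii, and Mitchell.
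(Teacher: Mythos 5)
Your proposal is correct and follows essentially the same route as the paper's own sketch: connectedness and local connectedness from the geodesic hypothesis, doubling (hence finite dimension) from local compactness, homogeneity and self-similarity, then Gleason--Montgomery--Zippin, Berestovskii, and Mitchell, concluding because $X$ is isometric to its tangent. Your added care about the single-factor self-similarity (propagating the packing bound along $\{\lambda^j r\}$ and identifying the tangent along the subsequence $\lambda_j=\lambda^j$) is a welcome elaboration of details the paper leaves implicit, but it is not a different argument.
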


\proof[Sketch of the proof]
Each such metric space $X$ is connected and locally connected.
Using the conditions of local compactness, 
  self-similarity, and homogeneity, one can show that $X$ is a doubling metric space.
In particular, $X$ is finite dimensional.
By the result of Gleason-Montgomery-Zippin (Theorem~\ref{thm:GMZ}) the space  $X$ has the structure of a homogeneous Lie space   $G/H$ and, by Berestovskii's result (Theorem~\ref{thm:Berestovskii}), as a metric space $X$ is an equiregular subFinsler manifold.
By Mitchell's result (Theorem~\ref{thm:Mitchell}), the tangents of $X$ are subFinsler Carnot groups.
Since $X$ is self-similar, $X$ is isometric to its tangents.
\qed

\section{\textbf{Isometries of metric groups}} \label{SecIsom}

\subsection{Regularity of isometries for homogeneous spaces}\label{SecIsom1}

Let $X$ be a metric space that is connected, locally connected, locally compact, with finite topological dimension, and isometrically homogeneous.
By Montgomery-Zippin,  $G:={\rm Iso} (X)$ has the structure of (analytic)  Lie group, which is unique.
Fixing $x_0\in X$, the subgroup $H:={\rm Stab}_G(x_0)$ is compact.
Therefore, $G/H$ has an induced structure of analytic manifolds.
We have that $X$ is homeomorphic to $G/H$ and $G$ acts on $G/H$ analytically.
One may wonder if $X$ could have had a different differentiable structure.
Next result says that this cannot happen.

\begin{theorem}[LD,  Ottazzi,  \cite{LeDonne-Ottazzi}]  
\label{compactness}
Let $M=G/H$ be a homogeneous manifold  
equipped with a   $G$-invariant  
 distance  $d$, inducing the manifold topology.
Then the isometry group ${\rm Iso} (M)$ is a Lie group, 
the action
 \begin{eqnarray}\label{Iso_azione}
 {\rm Iso} (M)\times M&\to& M\\
 (F,p)&\mapsto& F(p) \nonumber
 \end{eqnarray}
 is analytic, and, for all $p\in M$, the space 
 $
{\rm Iso}_p (M) 
$
is a compact Lie group.
\end{theorem}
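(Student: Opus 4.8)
The plan is to derive the three assertions from the structure theory of locally compact transformation groups, letting Theorem~\ref{thm:GMZ} do the work for~(1) and treating the comparison of two a priori different analytic structures on $M$ as the substantive point for~(2). First I would check that $M$ satisfies the hypotheses of Theorem~\ref{thm:GMZ}: being a finite-dimensional manifold, $M$ is locally connected, locally compact and of finite topological dimension, and we may take it connected (e.g.\ $G$ connected); since $d$ is $G$-invariant, $G$ acts transitively by isometries, so $M$ is isometrically homogeneous. Thus $L:={\rm Iso}(M)$ is a Lie group, which is~(1). Because $G$ maps into $L$, the group $L$ acts transitively on $M$, and the stabilizer $L_p:={\rm Iso}_p(M)$ is closed; hence $L/L_p$ is canonically an analytic manifold carrying an analytic $L$-action, and by transitivity, local compactness and Baire's theorem the orbit map induces a homeomorphism $L/L_p\cong M$.

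The heart of the matter is~(2): the $L$-action must be analytic for the \emph{given} structure of $M=G/H$, and not merely for the structure transported from $L/L_p$ (this is exactly the assertion that the differentiable structure of $M$ cannot be altered). I would show that the identity map, viewed as $\iota\colon (M,\text{given structure})\to(M,\text{structure of }L/L_p)$, is an analytic diffeomorphism. The key point is that $\iota$ is $G$-equivariant: on the source $G$ acts analytically by hypothesis, and on the target $G$ acts analytically through the continuous---hence analytic---homomorphism $G\to L$ followed by the analytic $L$-action on $L/L_p$, these two $G$-actions agreeing as set maps. For a Lie group acting analytically and transitively on a manifold, the orbit map has constant rank with full image, so it is a submersion and descends to an analytic diffeomorphism $G/G_p\xrightarrow{\sim}M$; applying this to each of the two structures and composing shows that both $\iota$ and $\iota^{-1}$ are analytic. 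Hence the given structure coincides with that of $L/L_p$ and the action ${\rm Iso}(M)\times M\to M$ is analytic. This equivariant comparison is where the content beyond Gleason--Montgomery--Zippin lies, and I expect it to be the main obstacle; its technical inputs are the automatic analyticity of continuous homomorphisms of Lie groups and the constant-rank description of transitive analytic actions.

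Finally, for~(3) I would use the classical Ascoli--Arzel\`a argument of van Dantzig and van der Waerden, already invoked for the Corollary above: since $M$ is connected and locally compact, ${\rm Iso}(M)$ acts properly, so each point stabilizer is compact. Concretely, every element of $L_p$ restricts to a self-isometry of a compact ball $\bar B(p,\epsilon)$ (using local compactness), this family is equicontinuous with relatively compact orbits, and Ascoli--Arzel\`a makes the set of restrictions relatively compact; since $L_p$ is closed in the Lie group $L$, it is a compact Lie group. I would emphasize that only local compactness, and not global properness of $(M,d)$, is used---consistent with the fact that a $G$-invariant distance can fail to be proper while still admitting compact point stabilizers.
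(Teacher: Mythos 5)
Your proposal is correct and follows essentially the same route as the paper, which derives the theorem from Theorem~\ref{thm:GMZ} together with the uniqueness of analytic structures on homogeneous Lie spaces (cited there as \cite[Proposition 4.5]{LeDonne-Ottazzi}); your equivariant comparison of the two structures via the constant-rank description of transitive analytic actions is precisely the content of that cited proposition, and your Ascoli--Arzel\`a argument for compactness of the stabilizer matches the paper's use of it elsewhere. The only point worth flagging is the implicit connectedness assumption needed to invoke Theorem~\ref{thm:GMZ}, which you correctly note and which the paper leaves tacit as well.
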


The above result is just a consequence of Theorem~\ref{thm:GMZ} and the   
uniqueness of analytic structures for homogeneous Lie spaces, see \cite[Proposition 4.5]{LeDonne-Ottazzi}.

\subsection{Isometries of Carnot groups}

We want to explain in further details what are the isometries of Carnot groups.
We summarize our knowledge by the following results.

\begin{itemize}
\item \cite{mz} implies that the global isometries are smooth, being Carnot groups examples of homogeneous spaces.
\item \cite{Capogna-Cowling} implies that isometries between open subsets of Carnot groups are smooth, since 1-quasi-conformal maps are.
\item  \cite{Capogna_LeDonne} implies that isometries between equiregular subRiemannian manifolds are smooth.
\item \cite{LeDonne-Ottazzi} implies that isometries between open subsets of subFinsler Carnot groups are affine, i.e., composition of translations and group homomorphisms.
\item  \cite{Kivioja_LeDonne_isom_nilpotent}   implies that isometries of nilpotent connected Lie groups are affine.
\end{itemize}
In the rest of this exposition we  give more explanation on   the last three points.

\subsection{Local isometries of Carnot groups} 

\begin{theorem}[LD,  Ottazzi,  
\cite{LeDonne-Ottazzi}]\label{localisometries}
Let $G_1, G_2$ be subFinsler Carnot groups and for $i=1,2$ consider  $\Omega_i\subset G_i$  open sets. If $F:\Omega_1\to \Omega_2$ is an isometry, then 
there exists a left translation $\tau$ on $G_2$ and a  group isomorphism $\phi $ between $G_1$ and $G_2$, 
such that $F$ is the restriction to $\Omega_1$ of $\tau\circ\phi$, which is a global isometry.
\end{theorem}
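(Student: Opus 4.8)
The plan is to show that, after reductions, $F$ agrees with an affine map $\tau\circ\phi$, by proving that the Pansu differential of $F$ is a single isometric isomorphism that does not depend on the base point. First I would normalize using translations: since left translations are isometries of a homogeneous distance, I fix $p_0\in\Omega_1$, put $q_0:=F(p_0)$, and replace $F$ by $L_{q_0}^{-1}\circ F\circ L_{p_0}$. Renaming, I may assume that $e$ lies in both domains and that $F(e)=e$. (I also assume $\Omega_1$ connected, handling several components separately and reconciling them by analytic continuation at the end.)

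Next I would invoke regularity: by \cite{Capogna-Cowling} an isometry between open subsets of Carnot groups is $1$-quasiconformal and therefore smooth, so $F$ is a smooth contact diffeomorphism whose differential carries the horizontal bundle $\Delta$ isometrically onto $\Delta$. By Pansu differentiation \cite{Pansu} the Pansu differential $P_pF$ then exists at every $p$, and since $F$ is a bijective isometry, $P_pF$ is a graded Lie group isomorphism $G_1\to G_2$ preserving the homogeneous distance, i.e.\ an isometric isomorphism. Setting $\phi:=P_eF$ and replacing $F$ by $\phi^{-1}\circ F$, I reduce to the case $G_1=G_2=:G$, $F(e)=e$, $P_eF=\id$; the goal becomes $F=\id$ near $e$.

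The heart of the argument is to prove that $p\mapsto P_pF$ is constant. Each $P_pF$ is an isometric graded automorphism of $G$ fixing $e$, hence lies in the stabilizer ${\rm Iso}_e(G)$, which is compact by Theorem~\ref{compactness}. Trivializing $\Delta$ by left translation identifies the horizontal part of $dF_p$ with an orthogonal map $A(p)$ of $V_1$, and by the bracket-generating property $A(p)$ determines all of $P_pF$. I would then show that the isometry condition, expanded to second order along horizontal curves, forces the horizontal derivative of $A$ to vanish, so that $A$, and with it $P_pF$, is locally constant; combined with $P_eF=\id$ this yields $P_pF\equiv\id$. Thus $dF$ is the identity on horizontal directions everywhere, and integrating along horizontal curves issuing from $e$ gives $F=\id$ on a neighborhood of $e$.

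Finally I would globalize: isometries in this setting are real-analytic, so agreement with $\id$ on an open set propagates across the connected domain, and undoing the normalizations exhibits $F$ as the restriction of $\tau\circ\phi$, which is a global isometry because $\phi$ is an isometric isomorphism and $\tau$ a translation. I expect the main obstacle to be exactly the constancy step. Passing from the first-order metric data $A(p)$ to genuine rigidity is delicate because sub-Finsler geodesics need not be unique nor determined by their initial velocity (abnormal minimizers), so the naive principle ``an isometry fixing initial velocities fixes geodesics'' is unavailable and must be replaced by the second-order contact computation indicated above.
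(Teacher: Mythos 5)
Your skeleton (normalize by translations, get smoothness from \cite{Capogna-Cowling}, bring in the Pansu differential) matches the paper's up to the point where the argument has to close, but there the proposal has a genuine gap. Your plan hinges on proving that $p\mapsto P_pF$ is constant by ``expanding the isometry condition to second order along horizontal curves'' to force the horizontal derivative of $A(p)$ to vanish. That computation is never exhibited, and you yourself flag it as the main obstacle; as written it is an assertion, not a proof. It is also not how the rigidity is actually obtained: the paper's Step~3 replaces it with Corollary~\ref{horizontal:differential} (an isometry of connected equiregular subRiemannian manifolds is determined by its value and its horizontal differential at a single point), whose proof goes through the compactness and smoothness of the stabilizer ${\rm Iso}_p$ as a Lie group, not through a pointwise second-order expansion. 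With that corollary in hand one never needs constancy of the Pansu differential: one checks (Step~4) that $(PF)_e$ exists and is a global isometric isomorphism with the same horizontal differential as $F$ at $e$, and then the corollary applied to $F$ and to $\tau\circ (PF)_e$ gives $F=\tau\circ(PF)_e$ outright. Your reduction to $P_eF=\id$ is fine, but the remaining claim ``$F=\id$ near $e$'' is exactly the content of that corollary, so you should invoke it (or prove it) rather than route through the unproven constancy of $A(p)$.

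Two secondary points. First, you apply \cite{Capogna-Cowling} directly to a subFinsler isometry, whereas that smoothness theorem is established in the subRiemannian setting; the paper inserts Step~1 (regularize the subFinsler norm to reduce to a subRiemannian distance with the same isometries) precisely to make this legitimate, and you should do the same. Second, the existence of $P_pF$ at \emph{every} point is not Pansu's a.e.\ theorem but a consequence of smoothness plus the contact property, so the order of quantifiers in your appeal to \cite{Pansu} should be adjusted; only the differential at the single base point is actually needed.
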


\proof[Sketch of the proof]
In four steps:
\begin{itemize}
\item[Step 1.] We may assume that the distance is subRiemannian, regularizing the subFinsler norm.
\item[Step 2.] $F$ is smooth; this is a PDE argument using the regularity of the subLaplacian, see next session. Alternatively, one can use \cite{Capogna-Cowling}.
\item[Step 3.] $F$ is completely determined by its horizontal differential $(\mathrm d F)_{H_eG}$, see the following Corollary~\ref{horizontal:differential}. 
\item[Step 4.] The Pansu differential $(PF)_e$ exists and is an isometry with same  horizontal differential of $F$ at $e$, see \cite{Pansu}.
 \qed
\end{itemize}

We remark that in Theorem~\ref{localisometries} the assumption that $\Omega_i$ are open is necessary, unlike in the Euclidean case. However, these open sets are not required to be connected.
\subsection{Isometries of subRiemannian manifolds}
The regularity of subRiemannian isometries should be thought as into two steps, where as an intermediate result one obtains the preservation of a good measure: the Popp measure.
A good introduction to the notion of Popp measure can be found in \cite{Barilari-Rizzi}.
Both of next results are due to the author in collaboration with Capogna, see  \cite{Capogna_LeDonne}.

\begin{theorem}[LD,  Capogna]\label{reg:theorem}
Let $F:M\to N$ be an  isometry between two    subRiemannian manifolds.
If there exist  two $C^\infty$ volume forms
$\vol_M$ and $\vol_N$   such that
  $F_*\vol_M=\vol_N$,
then 
$F$ is a $C^\infty$ diffeomorphism.
\end{theorem}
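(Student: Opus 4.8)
The plan is to route the problem through the \emph{subLaplacian} and its hypoellipticity, so that smoothness of $F$ becomes smoothness of solutions of a subelliptic equation. On each subRiemannian manifold the horizontal distribution $\Delta$ and its metric give a horizontal gradient $\nabla_H$, and, together with the chosen smooth volume form, a subLaplacian $\mathcal L_M u:=\operatorname{div}_{\vol_M}(\nabla_H u)$, characterized weakly through the Dirichlet form $\mathcal E_M(u,v)=\int_M\langle\nabla_H u,\nabla_H v\rangle\,\vol_M$; similarly one has $\mathcal L_N$ and $\mathcal E_N$ built from $\vol_N$. Since $\Delta$ is bracket-generating, $\mathcal L_M$ and $\mathcal L_N$ are hypoelliptic by H\"ormander's theorem, which is the source of all the regularity to come.

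First I would record the soft facts: $F$ is a distance isometry, hence $1$-Lipschitz with $1$-Lipschitz inverse, and for any locally Lipschitz $u$ the horizontal gradient norm agrees almost everywhere with the purely metric slope $\operatorname{Lip}_d u(x)=\limsup_{y\to x}|u(y)-u(x)|/d(x,y)$ (a metric--analytic identity provable through Rademacher-type differentiability in the spirit of \cite{Pansu}). Because $F$ preserves the CC distance, the slope is transported exactly, $\operatorname{Lip}_d(u\circ F)=(\operatorname{Lip}_d u)\circ F$ pointwise, so the energy \emph{density} is preserved, $|\nabla_H(u\circ F)|=|\nabla_H u|\circ F$ almost everywhere; by polarization together with linearity of $\nabla_H$ this upgrades to the bilinear identity $\langle\nabla_H(u\circ F),\nabla_H(v\circ F)\rangle=\langle\nabla_H u,\nabla_H v\rangle\circ F$.

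Now the volume hypothesis enters. Using $F_*\vol_M=\vol_N$ to change variables in the integral --- which is precisely what is needed to match the \emph{divergence} (measure-dependent) part of the operators --- the density identity gives $\mathcal E_M(u\circ F,v\circ F)=\mathcal E_N(u,v)$ for all $u,v\in C^\infty(N)$. Comparing with the weak definitions of the two subLaplacians, and using that $F$ is a homeomorphism so that $v\circ F$ ranges over a rich enough class of test functions on $M$, this yields the intertwining $\mathcal L_M(u\circ F)=(\mathcal L_N u)\circ F$ in the weak sense.

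Finally comes the bootstrap. Fixing smooth local coordinates $y_1,\dots,y_n$ on $N$ and setting $F_i:=y_i\circ F$, the intertwining reads $\mathcal L_M F_i=(\mathcal L_N y_i)\circ F$ weakly, where each $\mathcal L_N y_i$ is smooth on $N$; since $F$ is continuous the right-hand side is continuous, and subelliptic estimates for the hypoelliptic $\mathcal L_M$ improve the regularity of $F_i$, which improves the right-hand side, and iterating this loop forces $F_i\in C^\infty$. Thus $F$ is smooth, and running the same argument for $F^{-1}$ shows $F$ is a $C^\infty$ diffeomorphism. The main obstacle is the passage from the second to the third paragraph: converting the purely metric isometry property of an \emph{a priori only continuous} map into a genuine operator intertwining. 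This rests on the almost-everywhere identification of the horizontal gradient norm with the metric slope and on the careful use of $F_*\vol_M=\vol_N$ to align the zeroth-order divergence terms; once the intertwining is in hand, the remainder is the standard hypoelliptic bootstrap, exactly parallel to the harmonic-coordinate proof of the classical Myers--Steenrod theorem.
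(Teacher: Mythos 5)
Your proposal follows essentially the same route as the paper's proof: identify the horizontal energy density almost everywhere with a purely metric invariant (the paper invokes Hajlasz--Koskela's minimal upper gradient where you use the metric slope, which is the same identification), use $F_*\vol_M=\vol_N$ to intertwine the weak subLaplacians defined by the Dirichlet forms, and then run the hypoelliptic (Rothschild--Stein/H\"ormander) bootstrap on smooth coordinate functions composed with $F$. The argument is correct and matches the paper's Steps 1--4.
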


\begin{theorem}[LD,  Capogna]\label{Popp:preserved_intro}
Let $F:M\to N$ be an  isometry between equiregular subRiemannian manifolds.
If $\vol_M$ and $\vol_N$ are the Popp measures on $M$ and $N$, respectively,
then 
 $F_*\vol_M=\vol_N$.
\end{theorem}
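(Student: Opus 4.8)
The plan is to prove that the Popp measure is a metric invariant by expressing it through the spherical Hausdorff measure, which is manifestly preserved by any distance-preserving map. Write $Q$ for the (constant, by equiregularity) Hausdorff dimension of $M$ and $N$, and let $\mathcal{S}^Q$ denote the spherical Hausdorff measure built from the subRiemannian distance. Since $F$ is a distance-preserving homeomorphism, it preserves every purely metric quantity; in particular $F_*\mathcal{S}^Q_M=\mathcal{S}^Q_N$. I stress that at this stage one may \emph{not} assume $F$ to be smooth: the present statement is exactly the input fed into Theorem~\ref{reg:theorem} to \emph{deduce} smoothness, so the whole argument must remain at the level of the distance and its Gromov--Hausdorff tangents.

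First I would recall the relation between the two measures. The Popp measure $\vol_M$ and $\mathcal{S}^Q_M$ are mutually absolutely continuous, so there is a Borel density $\theta_M\colon M\to(0,\infty)$, positive and finite $\mathcal{S}^Q_M$-almost everywhere, with $d\mathcal{S}^Q_M=\theta_M\,d\vol_M$ (see \cite{Barilari-Rizzi} and the references therein). The crucial structural fact, which is the heart of the matter, is that this density is determined pointwise by the metric tangent: by Mitchell's theorem (Theorem~\ref{thm:Mitchell}) the blow-up of $M$ at $p$ is a subFinsler --- here subRiemannian --- Carnot group $G_p$, and one has $\theta_M(p)=\beta(G_p)$, where $\beta(G)$ is the proportionality factor between $\mathcal{S}^Q_G$ and $\mathrm{Popp}_G$ on the Carnot group $G$. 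Both of these are left-invariant and dilation-homogeneous of degree $Q$, hence Haar measures, hence proportional; and the ratio $\beta(G)=\mathcal{S}^Q_G/\mathrm{Popp}_G$ depends only on the isometry type of $G$. Indeed, on a Carnot group every isometry is affine (Theorem~\ref{localisometries}), and such a map is a translation composed with a grading-preserving automorphism that is orthogonal on $V_1$; it therefore preserves $\mathrm{Popp}_G$, which is built canonically from the inner product on $V_1$ and the brackets, as well as the metric invariant $\mathcal{S}^Q_G$, and so preserves their ratio.

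With this in hand the conclusion is a short computation. Because $F$ is an isometry it induces, for every $p$, an isometry of pointed Gromov--Hausdorff tangents $G_p\to G_{F(p)}$; hence $\beta(G_p)=\beta(G_{F(p)})$, that is $\theta_M=\theta_N\circ F$. Combining this with $F_*\mathcal{S}^Q_M=\mathcal{S}^Q_N$, for every Borel set $A\subseteq N$ we get
\[
(F_*\vol_M)(A)=\int_{F^{-1}(A)}\theta_M^{-1}\,d\mathcal{S}^Q_M
=\int_{F^{-1}(A)}(\theta_N^{-1}\circ F)\,d\mathcal{S}^Q_M
=\int_A\theta_N^{-1}\,d\mathcal{S}^Q_N=\vol_N(A),
\]
where the last-but-one equality is the pushforward change of variables applied to $F$. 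This is the desired identity $F_*\vol_M=\vol_N$.

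The main obstacle is the structural fact $\theta_M(p)=\beta(G_p)$, namely that the Radon--Nikodym density of $\mathcal{S}^Q$ with respect to $\mathrm{Popp}$ is read off from the nilpotent tangent cone alone. Establishing it requires showing that, under the intrinsic dilations centered at $p$, the rescaled Popp measures converge to the Popp (Haar) measure of $G_p$, and then running a Hausdorff-density computation that exploits the uniform convergence of the rescaled subRiemannian balls to the unit ball of $G_p$; equiregularity is precisely what guarantees that this tangent is a Carnot group depending controllably on $p$. Everything else---invariance of $\mathcal{S}^Q$ and of tangent cones under $F$, and affineness of isometries of Carnot groups---is either formal or already available from the results quoted above.
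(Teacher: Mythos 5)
Your proof follows essentially the same route as the paper's: reduce to the Carnot-group case, where Popp and spherical Hausdorff measures are both Haar and hence proportional with a metrically invariant ratio, and then transfer this to the manifold via the representation formula expressing $\mathrm d\vol_M$ as a density times $\mathrm d\mathcal S^Q_M$, the density being determined by the nilpotent tangent at each point. You also correctly identify the genuine technical content (that the Radon--Nikodym density is read off from the tangent Carnot group, i.e., that the nilpotentization of the Popp measure is the Popp measure of the nilpotentization), which is exactly what the paper delegates to the cited representation formula of Agrachev--Barilari--Boscain and Ghezzi--Jean.
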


\proof[How to prove Theorem~\ref{Popp:preserved_intro}]
In two steps:
\begin{itemize}
\item[Step 1.] Carnot group case: Popp is a Haar measure and hence a fixed multiple of the Hausdorff measure.
The latter is a metric invariant.
\item[Step 2.] There is a representation formula (\cite[pages 358-359]{Agrachev_Barilari_Boscain:Hausdorff},  \cite[Section 3.2]{ghezzi-jean}]) for the Popp volume in terms of the Hausdorff measure and the tangent measures in the tangent metric space, which is a Carnot group:
\begin{equation*}\label{F1}
\mathrm d \vol_M = 2^{-Q} {\mathcal N}_p(\vol_M) (B_{{\mathcal N}_p(M)} (e,1)) \mathrm d {\mathcal S}^Q_M.
\end{equation*}
One then makes use of Step 1.
 \qed
\end{itemize}

\proof[How to prove Theorem~\ref{reg:theorem}]
In many steps:
\begin{itemize}
\item[Step 1.]
$F$ is an isomorphism of metric {\em measure} spaces,
so $F$ preserves minimal upper gradients, Dirichlet energy, harmonic maps, and subLaplacian.
Recall the horizontal gradient:
If $X_1, \ldots, X_m$ is an orthonormal frame for the horizontal bundle, define $$\nabla_{\rm H} u := (X_1 u ) X_1 +\ldots +(X_m u ) X_m.$$
From the horizontal gradient one defines the subLaplacian:  $\Delta_{\rm H} u  = g$ means
 $$ \int_M g v  \;\mathrm  d\vol_M=\int_M \langle \nabla_{\rm H} u ,  \nabla_{\rm H} v\rangle \; \mathrm d\vol_M  , \qquad \forall v\in {\rm Lip}_c(M),
$$
We remark that  $\Delta_{\rm H}(\cdot)$ depends on the choice of the measure $\vol_M$.
\item[Step 2.] Hajlasz and Koskela's result,
\cite[page 51 and Section 11.2]{HK}:  
  $\norm{\nabla_{\rm H} u }$ coincides almost everywhere with the minimal upper gradient of $u$.
    Consequently, since  $F_*\vol_M=\vol_N$,
  $$ \Delta_{\rm H} u  = g  \implies   \Delta_{\rm H} (u\circ F)  = g\circ F,$$
  where the first subLaplacian is with respect to $\vol_N$ and the second one  with respect to $\vol_M$.
\item[Step 3.] Rothschild and Stein's version of H\"ormander's Hypoelliptic Theorem,  \cite[Theorem 18]{Roth:Stein}:
Let $X_0, X_1,...,X_r$ bracket generating  vector fields in $\R^n$.  
Let $u$ be a distributional solution to the equation $\mathcal   (X_0+ \sum_{i=1}^r X_i^2  )u=g$ and let   $k\in \N\cup \{0\}$ and $1<p<\infty$. Then, considering the horizontal local Sobolov spaces
$W^{k,p}_{\rm H,loc} $, 
$$g\in W_{\rm H}^{k,p}(\R^n,{\mathcal L}^n) \implies u\in W^{k+2,p}_{\rm H,loc}(\R^n,{\mathcal L}^n).$$
Corollary: 
If $\vol_M$ is a $C^\infty$ volume form,   
$$\Delta_{\rm H} u \in W^{k,p}_{\rm H,loc}(M,\vol_M)\implies u\in W^{k+1,p}_{\rm H,loc}(M,\vol_M).$$
\item[Step 4.]
Bootstrap  argument:
$F$ isometry implies that $F\in W^{1,p}_{\rm H}$. Taking $x_j$ some $C^\infty$ coordinate system, we get that
$\Delta_{\rm H} x_i=:g_i\in C^\infty \subset  W^{k,p}_{\rm H,loc}$, for all $k$ and $p$.
Then, by Step 3
$$\Delta_{\rm H} (x_i \circ F)  = g_i\circ F   \in W^{1,p}_{\rm H} 
\implies
x_i \circ F   \in W^{2,p}_{\rm H}.$$
Then we iterate Rothschild and Stein's regularity of Step 3:
$$ g_i\circ F \in W^{2,p}_{\rm H,loc}  
\implies
 x_i \circ F\in   W^{3,p}_{\rm H,loc}$$
and by induction we complete.
 \qed
\end{itemize}

\subsection{Sub-Riemannian isometries are determined by the horizontal differential}

\begin{corollary}\label{horizontal:differential}
Let $M$ and $N$ be two connected equiregular subRiemannian manifolds.
Let $p\in M$ and let $\Delta$ be the horizontal bundle of $M$.
Let $f,g:M\to N$ be two isometries.
If $f(p)=g(p)$ and $df|_{\Delta_p}=dg|_{\Delta_p}$, then $f=g$.
\end{corollary}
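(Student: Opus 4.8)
The plan is to reduce to a single self-isometry and then argue by a blow-up/rigidity scheme that it is the identity. By Theorems~\ref{Popp:preserved_intro} and \ref{reg:theorem}, isometries between equiregular subRiemannian manifolds are smooth; in particular $f$ and $g$ are $C^\infty$ diffeomorphisms, so $h:=g^{-1}\circ f$ is a smooth isometry of $M$. Since $f(p)=g(p)$ we get $h(p)=p$, and since $df|_{\Delta_p}=dg|_{\Delta_p}$ while $dg_p$ restricts to a linear isometry $\Delta_p\to\Delta_{g(p)}$, the chain rule gives $dh_p|_{\Delta_p}=\mathrm{id}$. Thus it suffices to prove that a smooth isometry $h$ of a connected equiregular $M$ with $h(p)=p$ and $dh_p|_{\Delta_p}=\mathrm{id}$ must be the identity.

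First I would show that $A:=dh_p$ is unipotent. Being an isometry, $h$ preserves the horizontal bundle $\Delta$ and hence the whole bracket-flag $\Delta=\Delta^1\subseteq\Delta^2\subseteq\cdots$; therefore $A$ preserves the flag $\Delta^1_p\subseteq\Delta^2_p\subseteq\cdots$ and induces an automorphism of the nilpotentization $\operatorname{gr}_p=\bigoplus_k \Delta^k_p/\Delta^{k-1}_p$, which by Mitchell's theorem (Theorem~\ref{thm:Mitchell}) is the stratified Lie algebra of the metric tangent Carnot group. This induced automorphism respects the bracket, because $h_*[X,Y]=[h_*X,h_*Y]$ for vector fields, and it equals the identity on the degree-one layer $\Delta_p$. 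As $\operatorname{gr}_p$ is generated by $\Delta_p$, the induced graded automorphism is the identity, which means exactly that $A-\mathrm{id}$ strictly lowers the filtration, i.e. $A$ is unipotent.

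Next I would show that $A$ preserves some inner product on $T_pM$. Small closed CC-balls $\overline B(p,r)$ are compact, and every power $h^n$ fixes $p$ and preserves $\overline B(p,r)$; since the $h^n$ are $1$-Lipschitz, Ascoli--Arzel\`a makes $\{h^n|_{\overline B(p,r)}\}$ precompact in the uniform topology, and by the $C^\infty$-compactness of families of isometries furnished by the regularity theory this upgrades to precompactness of $\{dh^n_p\}=\{A^n\}$ in $GL(T_pM)$. Hence $A$ lies in a compact subgroup of $GL(T_pM)$ and so preserves an inner product. A unipotent linear map that is orthogonal for some inner product has all eigenvalues equal to $1$ and is diagonalizable, whence $A=\mathrm{id}$.

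Finally I would propagate $dh_p=\mathrm{id}$ to $h=\mathrm{id}$. A smooth isometry commutes with the subRiemannian normal geodesic flow on $T^*M$; since $h(p)=p$ and $dh_p=\mathrm{id}$, its cotangent lift fixes $T^*_pM$ pointwise, so $h$ fixes every normal geodesic issuing from $p$. The points reached by such geodesics are dense in a neighborhood of $p$ (the image of the normal exponential map contains an open dense set), so by continuity $h=\mathrm{id}$ near $p$. The set of points admitting a neighborhood on which $h=\mathrm{id}$ is then nonempty and open, and it is closed because at a limit point $q$ the $C^1$-continuity forces $dh_q=\mathrm{id}$ and the local argument reapplies; by connectedness of $M$ it is all of $M$, so $h=\mathrm{id}$ and $f=g$. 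I expect the main obstacle to be exactly the elimination of a nontrivial unipotent part of $dh_p$: the structural argument only forces $A$ to be unipotent, and it is the compactness input --- that powers of $h$ cannot escape to infinity --- that collapses $A$ to the identity, and making the $C^0$-to-$C^\infty$ precompactness rigorous is the delicate point.
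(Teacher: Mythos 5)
Your argument is correct, and its skeleton coincides with that of the proof in the cited reference \cite{LeDonne-Ottazzi}: reduce to the self-isometry $h=g^{-1}\circ f$, use bracket-generation of $\Delta$ to see that $dh_p$ induces the identity on the nilpotentization and is therefore unipotent, and use compactness of the isotropy group to kill the unipotent part. Two comments. First, the step you rightly single out as delicate --- that $\{(dh_p)^n\}$ stays in a compact subgroup of $GL(T_pM)$ --- is exactly what the regularity theory already supplies: by Theorems~\ref{Popp:preserved_intro} and~\ref{reg:theorem} the isometries are smooth, and the stabilizer $\mathrm{Iso}_p(M)$ is a compact Lie group acting smoothly (Theorem~\ref{compactness} and \cite{Capogna_LeDonne}), so the isotropy representation $F\mapsto dF_p$ is a continuous homomorphism with compact image; there is no need to re-run Ascoli--Arzel\`a and upgrade $C^0$-precompactness by hand. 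Second, your final propagation step is genuinely different from the one in the reference. You fix all normal geodesics from $p$ and invoke the density of the image of the normal exponential map; that density is true but is itself a nontrivial theorem (Agrachev's result on points of smoothness of the sub-Riemannian distance), so you are importing heavy machinery at the very end. The more elementary route, implicit in the averaging construction of Section~\ref{sec:Isometrically}, is to average an auxiliary Riemannian metric over the compact group $\overline{\langle h\rangle}\subseteq \mathrm{Iso}_p(M)$ to obtain an $h$-invariant Riemannian metric on $M$; then $h$ is a Riemannian isometry of a connected manifold fixing $p$ with $dh_p=\mathrm{id}$, hence $h=\mathrm{id}$ by classical Myers--Steenrod rigidity (equivalently, by Bochner linearization of the compact group action at $p$). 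What your version buys is independence from any auxiliary Riemannian structure; what it costs is the appeal to the density theorem and a slightly more involved open-closed argument.
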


The proof can be  read in \cite[Proposition 2.8]{LeDonne-Ottazzi}.
Once we know that the isometries fixing a point are a (smooth) compact Lie group, the argument is an easy exercise in differential geometry.
\subsection{Isometries of nilpotent groups}

The fact that Carnot isometries are affine (Theorem~\ref{localisometries}) is a general feature of the fact that we are dealing with a nilpotent group.
In fact, isometries are affine whenever they are globally defined on a nilpotent connected group.
Here we obviously require that the distances are left-invariant and induce the manifold topology.
For example, this is the case for arbitrary homogeneous groups.

\begin{thm}[LD,  Kivioja]
Let \( N_1 \) and \( N_2 \) be two nilpotent connected metric Lie groups. Any isometry \( F \colon N_1 \ra N_2 \) is affine.
\end{thm}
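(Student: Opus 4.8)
The plan is to reduce the statement to the claim that an isometry fixing the identity is a group isomorphism, and then to recognize the left translations intrinsically inside the isometry group. Writing $e_i$ for the identity of $N_i$, I would first replace $F$ by $L_{F(e_1)^{-1}}\circ F$, so that it suffices to treat an isometry with $F(e_1)=e_2$ and to show it is a group isomorphism. Denote by $\Lambda_i\cong N_i$ the group of left translations of $N_i$ and by $C_F(\phi):=F\circ\phi\circ F^{-1}$ the conjugation isomorphism ${\rm Iso}(N_1)\to{\rm Iso}(N_2)$. The elementary but decisive observation is that if $C_F(\Lambda_1)\subseteq\Lambda_2$, then each $C_F(L_g)$ is a left translation, and since a left translation is determined by its value at the identity, evaluating at $e_2=F(e_1)$ gives $C_F(L_g)=L_{F(g)}$; applying this identity to an arbitrary point of $N_2$ yields $F(g\cdot y)=F(g)\cdot F(y)$ for all $g,y\in N_1$. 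As $F$ is a homeomorphism, it is then a group isomorphism. Everything therefore reduces to showing that $F$ maps left translations to left translations.

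To set up the structure theory, by Theorem~\ref{thm:GMZ} and Theorem~\ref{compactness} each ${\rm Iso}(N_i)$ is a Lie group acting analytically on $N_i$ with compact point-stabilizer $K_i$; since $\Lambda_i$ is already transitive, $N_i\cong{\rm Iso}(N_i)/K_i$. Next I would pass to a Riemannian model: by the corollary in Section~\ref{sec:Isometrically} there is an ${\rm Iso}(N_i,d_i)$-invariant Riemannian distance $d_i^R$ with ${\rm Iso}(N_i,d_i)\subseteq{\rm Iso}(N_i,d_i^R)$, and because left translations are $d_i$-isometries this $d_i^R$ is left-invariant. It thus suffices to prove that $\Lambda_i$ is a characteristic subgroup of the possibly larger group ${\rm Iso}(N_i,d_i^R)$, for then $C_F(\Lambda_1)=\Lambda_2$ as required.

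The core of the argument, which I expect to be the main obstacle, is an algebraic characterization of $\Lambda_i$. Passing to Lie algebras, let $\mathfrak{g}_i$ be the Lie algebra of ${\rm Iso}(N_i,d_i^R)^0$ and write $\mathfrak{g}_i=\mathfrak{n}_i\oplus\mathfrak{k}_i$ as a vector space, where $\mathfrak{n}_i\cong{\rm Lie}(N_i)$ is nilpotent, $\mathfrak{k}_i={\rm Lie}(K_i)$ is compactly embedded, and $\mathfrak{n}_i\cap\mathfrak{k}_i=\{0\}$ because the $N_i$-action is simply transitive. I would prove that $\mathfrak{n}_i$ is exactly the nilradical of $\mathfrak{g}_i$: on the one hand compactness of $K_i$ makes $\mathrm{ad}(\mathfrak{k}_i)$ completely reducible, so no nonzero element of $\mathfrak{k}_i$ has nilpotent adjoint, which keeps $\mathfrak{k}_i$ out of every nilpotent ideal; on the other hand nilpotency of $N_i$ together with the simply transitive action is what forces $[\mathfrak{k}_i,\mathfrak{n}_i]\subseteq\mathfrak{n}_i$, making $\mathfrak{n}_i$ an ideal and hence the maximal nilpotent one. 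This is precisely the step where nilpotency is indispensable; in the Riemannian setting it is Wilson's theorem on isometry groups of nilpotent Lie groups, and I would either invoke it or reprove it via the above $\mathrm{ad}(\mathfrak{k}_i)$-invariance.

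Since the nilradical is preserved by every isomorphism of Lie groups, $C_F$ carries $\Lambda_1$ onto $\Lambda_2$, and by the reduction of the first paragraph $F$ is affine. When $N_i$ is connected but not simply connected, I would lift $F$ to the simply connected covers, which are nilpotent Lie groups equipped with the pulled-back left-invariant distances, apply the previous case, and then descend to $N_i$.
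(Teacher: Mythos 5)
Your overall strategy---reduce to an identity-preserving isometry, show that conjugation $C_F$ carries left translations to left translations by identifying $\Lambda_i$ with the nilradical of the isometry group, and trace the key normality statement back to the Riemannian theorem of Wolf/Wilson---is exactly the route of \cite{Kivioja_LeDonne_isom_nilpotent}, which the text only cites and describes in one sentence. The first-paragraph reduction (evaluating $C_F(L_g)$ at $F(e_1)$ to get $F(g\cdot y)=F(g)\cdot F(y)$) is correct and standard. However, there are two genuine gaps. The first is the claim that it suffices to prove $\Lambda_i$ characteristic in the \emph{larger} group ${\rm Iso}(N_i,d_i^R)$. This is a non sequitur: $C_F$ is conjugation by an isometry of the original distances, hence an isomorphism ${\rm Iso}(N_1,d_1)\to{\rm Iso}(N_2,d_2)$, and it has no reason to extend to the Riemannian isometry groups---indeed $F$ need not be an isometry for the metrics $d_i^R$, which were produced by separate averaging procedures on the two sides. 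A subgroup characteristic in an overgroup need not be preserved by an isomorphism between smaller subgroups. You must characterize $\Lambda_i$ intrinsically inside ${\rm Iso}(N_i,d_i)$ itself (e.g.\ as the identity component of its nilradical); the auxiliary Riemannian metric may still serve as a computational tool, but the final characterization has to live in ${\rm Iso}(N_i,d_i)$.

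The second gap is that the nilradical identification is only half done, and the half you supply is not the hard half. Complete reducibility of ${\rm ad}(\mathfrak{k}_i)$ gives $\mathfrak{k}_i\cap{\rm nil}(\mathfrak{g}_i)=\{0\}$, but this does not exclude ``diagonal'' elements $X+K$ with $K\neq 0$ from the nilradical, so it does not yield ${\rm nil}(\mathfrak{g}_i)\subseteq\mathfrak{n}_i$. The clean way to close is: once $\mathfrak{n}_i$ is known to be an ideal, the nilradical contains $\Lambda_i$ and hence acts transitively with compact stabilizers; a compact subgroup of a connected nilpotent Lie group is central, and a central element fixing one point of a transitive isometric action fixes every point, so the nilradical acts simply transitively and therefore coincides with $\Lambda_i$. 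Moreover, the assertion that ``nilpotency plus simple transitivity forces $[\mathfrak{k}_i,\mathfrak{n}_i]\subseteq\mathfrak{n}_i$'' is precisely the theorem's core, not an observation: invoking Wilson covers the left-invariant Riemannian case, but extending normality of $\Lambda_i$ to arbitrary admissible left-invariant distances and to non-simply-connected groups is the actual content of \cite{Kivioja_LeDonne_isom_nilpotent}. Relatedly, the final reduction to universal covers does not work as stated, since a general left-invariant admissible distance is not a length distance and cannot be pulled back along a non-injective covering map; the nilradical argument should be run directly on the connected group.
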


This result is proved in  \cite{Kivioja_LeDonne_isom_nilpotent} with algebraic techniques by studying the nilradical, i.e., the biggest
nilpotent ideal, of the  group of self-isometries of a nilpotent connected metric Lie group. 
The proof leads back to a Riemannian result of Wolf, see \cite{}.

\subsection{Two isometric non-isomorphic groups}

In general   isometries of a subFinsler Lie group $G$ may not be affine, not even in the Riemannian setting.
As counterexample, we take the 
universal covering group $\tilde G$ of the
group $G=E(2)$ of Euclidean motions of the plane. 
This group is also called {\em roto-translation} group.
One can see that there exists a Riemannian distance on  $\tilde G$ 
that makes it isometric to the Euclidean space $\R^3$. In particular, they have the same isometry group. However, a straightforward calculation of the automorphisms shows that not all isometries fixing the identity are group isomorphisms of $\tilde G$. 
 As a side note, we remark that the group $\tilde G$ admit a left-invariant subRiemannian structure and a map into the subRiemannian Heisenberg group that is locally bi-Lipschitz. However, these two spaces are not quasi-conformal, see \cite{Faessler_Koskela_LeDonne}.
 
 Examples of isometric Lie groups that are not isomorphic can be found also in the strict subRiemannian context. There are examples in 3D, see  \cite{Agrachev_Barilari}. 
 Also the analogue of the roto-traslation construction can also be developed.

\noindent{\bf Acknowledgments.}
The author would like to thank
E.~Breuillard,
 S.~Nicolussi Golo,
 A.~Ottazzi,
 A.~Prantl, 
 S.~Rigot
 for help in preparing this article.
This primer   was written after the preparation of a  mini course held at the
 Ninth School on `Analysis and Geometry in Metric Spaces'
in Levico Terme  in July 2015.
 The author would like to also thank
  the organizers: L.~Ambrosio, 
B.~Franchi, 
I.~Markina, 
R.~Serapioni, 
F.~Serra Cassano. 
  The author  is supported by the Academy of Finland project no. 288501.   
 
\bibliography{general_bibliography} 
\bibliographystyle{amsalpha}

\end{document}